\newcommand\sizeone{0.4cm}
\newcommand\sizetwo{1.8cm}
\newcommand\sizethree{2.1cm}
\newcommand\sizefour{2.8cm}
\newcommand\sizefive{3.5cm}
\newcommand\dd[2]{\centering $\delta^{#1}(1-\delta)^{#2}$}
\DeclareMathOperator{\Aut}{Aut}
\DeclareMathOperator{\diam}{diam}
\DeclareMathOperator{\fix}{fix}
\DeclareMathOperator{\Sym}{Sym}
\DeclareMathOperator{\Alt}{Alt}
\newcommand{\expec}{\mathbb{E}}
\newcommand{\Prob}{\mathbb{P}}
\newcommand{\Z}{\mathbb{Z}}
\newcommand{\supp}{{\rm supp}}
\newcommand{\prob}{{\rm Prob}}
\newtheorem{thm}{Theorem}[section]
\newtheorem{conj}[thm]{Conjecture}
\newtheorem{cor}[thm]{Corollary}
\newtheorem{lemma}[thm]{Lemma}
\newtheorem{defn}[thm]{Definition}
\numberwithin{equation}{thm}
\newcommand{\fixed}{\mathsf{fixed}}
\newcommand{\ourmid}{\;\Bigl\vert\;}
\begin{document}

\title{Bounds on the diameter of Cayley graphs of the Symmetric group}

\author[J.~Bamberg]{John Bamberg}
\author[N.~Gill]{Nick Gill}
\author[T.~P.~Hayes]{Thomas P.~Hayes}
\author[H.~A.~Helfgott]{Harald A.~Helfgott}
\author[\'A.~Seress]{\'Akos Seress}
\author[P. Spiga]{Pablo Spiga}

\address{John Bamberg\newline
 School of Mathematics and Statistics\newline
 University of Western Australia \newline 
35 Stirling Highway, Crawley, WA 6009, Australia}
\email{john.bamberg@uwa.edu.au}

\address{Nick Gill\newline
Department of Mathematics\newline
The Open University\newline
Walton Hall, Milton Keynes, MK7 6AA, United Kingdom}
\email{n.gill@open.ac.uk}

\address{Thomas P. Hayes\newline
Department of Computer Science\newline
Mail stop: MSC01 1130\newline
1 University of New Mexico\newline
Albuquerque, NM 87131-0001}
\email{hayes@cs.unm.edu}

\address{Harald A.~Helfgott\newline
D\'epartement de math\'ematiques et applications\newline
\'Ecole normale sup\'erieure\newline
45 rue d'Ulm \newline F-75230 Paris,France}
\email{helfgott@dma.ens.fr}

\address{\'Akos Seress\newline
 School of Mathematics and Statistics\newline
 University of Western Australia \newline 
35 Stirling Highway, Crawley, WA 6009, Australia
\newline
and\newline
Department of Mathematics\newline
The Ohio State University \newline
231 W. 18th Avenue, Columbus, OH 43210, USA}
\email{akos@math.ohio-state.edu}

\address{Pablo Spiga\newline
Dipartimento di Matematica e Applicazioni\newline
University of Milano-Bicocca\newline
Via Cozzi 53, 20125 Milano, Italy} \email{pablo.spiga@unimib.it}

\subjclass[2000]{20B25}
\keywords{Cayley graph; diameter; Babai's conjecture; Babai-Seress conjecture} 

\begin{abstract}
In this paper we are concerned with the conjecture that, for any set of generators $S$ of the symmetric group $\Sym(n)$, the word length in terms of $S$ of every permutation is bounded above by a polynomial of $n$.
We prove this conjecture for sets of generators containing a permutation fixing at least $37\%$ of the points.
\end{abstract}

\maketitle

\section{Introduction}
For a group $G$ and a set $S$ of generators of $G$, we write
$\Gamma(G,S)$ for the \emph{Cayley graph} of $G$ with connection set $S$,
that is, the graph with vertex set $G$ and with edge set $\{\{g,sg\}\mid
g\in G,s\in S\}$. The \emph{diameter} $\diam(\Gamma)$ of a graph
$\Gamma$ is the maximum
distance among the vertices of $\Gamma$ and, in the case of a Cayley graph
$\Gamma(G,S)$, it is the maximum (over the group elements $g\in G$) of the
shortest expression $g=s_1^{i_1}\cdots s_m^{i_m}$, with $s_k\in S$ and
$i_k\in \{-1,1\}$. We define the \emph{diameter of a group $G$} as 
\[
\diam(G):= \max \{ \ \diam (\Gamma(G,S)) \mid S \mbox{ generates } G \ \}.
\]

A first investigation of the diameter of Cayley graphs for general groups was undertaken by Erd\H{o}s and R\'enyi~\cite{ErdosRenyi}. Later Babai and Seress~\cite{babaiseress} obtained asymptotic estimates on $\diam(G)$ depending heavily on the group structure of $G$. In particular the results in~\cite{babaiseress} highlight the discrepancy between the diameter of Cayley graphs of groups close to being abelian and the diameter of Cayley graphs of non-abelian simple groups.  Moreover, \cite{babaiseress} contains the following conjecture of Babai.

\begin{conj}[{\cite[Conjecture~$1.7$]{babaiseress}}]\label{babaiconj}
There exists $c>0$ such that, for all non-abelian simple groups $G$,
$\diam(G) \leq (\log |G|)^c$.
\end{conj}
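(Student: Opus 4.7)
The plan is first to invoke the Classification of Finite Simple Groups, which separates $G$ into four families: the alternating groups $A_n$ with $n\geq 5$, the classical groups of Lie type, the exceptional groups of Lie type, and the $26$ sporadic groups. The sporadic case is trivial since there are only finitely many such $G$, so any absolute constant suffices. The exceptional groups have bounded Lie rank and can be folded into the bounded-rank branch of the Lie-type analysis. The two genuinely hard sources of difficulty are therefore the classical groups of unbounded rank and the alternating groups $A_n$ as $n\to\infty$.

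For groups of Lie type, the natural approach is the \emph{product theorem} framework: following Helfgott, Pyber--Szab\'o, and Breuillard--Green--Tao, one shows that for any subset $A$ generating $G$, either $|A\cdot A\cdot A|\geq |A|^{1+\epsilon}$ or $A$ is contained in a proper subgroup. Coupled with a standard doubling-to-diameter amplification lemma, this yields $\diam(G)\leq(\log |G|)^{c(r)}$, where $r$ is the Lie rank of $G$. The first key step of any proof of Conjecture~\ref{babaiconj} on the Lie-type side will therefore be to remove the dependence of the exponent $c$ on $r$; equivalently, to prove a product theorem whose gain $\epsilon$ does not decay with the rank. This presumably requires exploiting the subgroup structure (parabolics, Levi decompositions, maximal tori) to induct on rank, combined with escape-from-subvariety arguments applied inside each Levi factor so that the gain does not deteriorate as one passes up the chain.

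For the alternating groups I would follow a \emph{support-reduction} strategy in the spirit of Babai and Seress. Given an arbitrary generating set $S$, one seeks to construct, as a word of length $(\log n!)^{O(1)}$ in $S$, a non-identity permutation whose support is a small fraction of $\{1,\dots,n\}$; once such a permutation is in hand, conjugating it around produces a polylogarithmic-length generating set of a point-stabilizer $A_{n-k}$, and one inducts on $n$. The main quantitative input is a ``small-support element'' lemma guaranteeing that some short word in any generating set of $A_n$ must move substantially fewer than $n$ points. Absent the stronger hypothesis exploited in the present paper --- namely that $S$ already contains a permutation fixing at least $37\%$ of its points --- the best unconditional bound of this shape presently known is quasipolynomial in $n$ (Helfgott--Seress), and closing the gap from quasipolynomial to polylogarithmic is the central open problem here.

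The hardest part, then, is not the organization of the proof but the two analytic inputs just isolated: a rank-uniform product theorem on the Lie side, and an unconditional polylogarithmic support-reduction lemma on the alternating side. Either ingredient in isolation would be a major breakthrough, and it is precisely the second of these which the present paper sidesteps by assuming at the outset that a near-fixed-point element lies in $S$, rather than attacking Conjecture~\ref{babaiconj} in the full generality as stated above.
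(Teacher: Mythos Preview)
The statement you were asked to prove is a \emph{conjecture}, not a theorem: the paper states it as Babai's open problem and does not prove it. There is no ``paper's own proof'' to compare against. Your proposal is not a proof either, and to your credit you say so explicitly: you outline the CFSG case split, identify the two genuine obstructions (a rank-uniform product theorem for Lie type groups and an unconditional polylogarithmic support-reduction lemma for $A_n$), and then concede that each of these is a major open problem. That is an accurate survey of the landscape, but it is a discussion of why the conjecture is hard, not a proof of it.

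If the intent was to prove the conjecture, the gap is exactly the one you name yourself: neither of the two analytic inputs you isolate is currently available. If the intent was merely to situate the paper's contribution, then your final paragraph is correct --- the paper attacks only the alternating case, and only under the additional hypothesis that the generating set already contains an element of support at most $0.63n$, precisely to bypass the unconditional support-reduction step that remains open.
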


The conjecture remains open, although significant progress has been made. In particular, starting with the work of Helfgott on the groups $\textrm{PSL}(2,p)$ and $\textrm{PSL}(3,p)$~\cite{helfgott2,helfgott3} and based thereon, there has been a series of results~\cite{bgt,helfgill, ps} proving the conjecture for finite simple groups of Lie type of bounded rank. The best statement known at the time of writing is by Pyber and Szab\'o~\cite{ps} and says that there exists a polynomial $c$ such that, for a finite simple group $G$ of Lie type
of Lie rank $r$, we have
$\diam(G)\leq (\log|G|)^{c(r)}$. For the sake of comparison,
Conjecture~\ref{babaiconj} asserts that $c$ should be a constant rather than a polynomial. 

The proofs of these theorems make use of new results in additive
combinatorics, specifically on growth in simple groups.  
We note that the difficulties in generalizing these results from
groups of bounded rank to those of unbounded rank seem closely related
to difficulties in proving Conjecture~\ref{babaiconj} for the alternating
groups $\Alt(n)$. In both cases (that is, classical groups of
unbounded rank and alternating groups) there are known counterexamples
to  general ``growth results" for sets (see for example~\cite{ppss,ps,sp}), which were central to the approach used to prove Conjecture~\ref{babaiconj} for groups of Lie type of bounded rank. What is more, these two classes of counterexample are, in some sense, related.

In this paper we focus on the case where $G=\Alt(n)$ or $\Sym(n)$. 
Let $\Omega$ be a set of size $n$. For $g\in \Sym(\Omega)$, define the \emph{support} of $g$ by
$\supp(g)=\{\gamma\in\Omega  \mid  \gamma^g\neq \gamma\}$. Observe that $\supp(g)$ is equal to the complement in $\Omega$ of the {\it fixed set}, $\fix(g)$, of $g$.
Babai, Beals and Seress~\cite{bbs} proved the following result.

\begin{thm}[{\cite{bbs}}]\label{t: bbs}
For every $\varepsilon<1/3$ there exists $c_\varepsilon>0$ such that, if $G=\Sym(n)$ or $\Alt(n)$ and $S$ is a set of generators of $G$ containing an element $g$ with $|\supp(g)| \le \varepsilon n$, then
$$\diam(\Gamma(G,S)) \leq c_\varepsilon n^8.$$
\end{thm}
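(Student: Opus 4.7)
My plan is a two-stage reduction that is standard in this area: Stage~1 shows that producing any $3$-cycle as a short word in $S$ suffices, and Stage~2 produces such a $3$-cycle when $S$ contains an element $g$ with $|\supp(g)| \le \varepsilon n < n/3$.

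In Stage~1, I would first prove that if some $3$-cycle lies at distance $L$ from the identity in $\Gamma(G,S)$, then $\diam(\Gamma(G,S)) = O(L \cdot n^{c})$ for an absolute constant $c$. Given a $3$-cycle $h$ at distance $L$, and using that $\langle S\rangle$ acts transitively on ordered triples from $\Omega$ with diameter $O(n^{c})$ in terms of $S$ (a Schreier--Sims / Schreier-tree argument exploiting $2$-transitivity), one can conjugate $h$ by short words to obtain every $3$-cycle of the form $(1,2,i)$. These generate $\Alt(n)$, and any element of $\Alt(n)$ is a product of at most $O(n)$ such $3$-cycles; in the $\Sym(n)$ case one further adjoins a transposition expressible as a short word built from $h$ and some odd $s\in S$.

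In Stage~2, starting from $g$ with $k := |\supp(g)| \le \varepsilon n < n/3$, I would build a sequence $g_0 = g, g_1, g_2, \dots$ of short words in $S$ whose supports strictly decrease, terminating in a $3$-cycle. At step $i$ I pick a short word $w_i \in S$ so that $\supp(g_i)$ and $\supp(g_i^{w_i}) = \supp(g_i)^{w_i}$ overlap in a very small prescribed number of points (one or two), and then take $g_{i+1}$ to be a commutator or carefully chosen product built from $g_i$ and $g_i^{w_i}$. The support of such a product concentrates near the intersection, yielding a geometric decrease. The hypothesis $\varepsilon < 1/3$ enters precisely here: two subsets of size at most $\varepsilon n$ inside a ground set of size $n$ can be positioned, using the transitivity of $\langle S\rangle$ on $k$-subsets, to overlap in any chosen small number, because $3k \le n$ leaves room for a third disjoint copy and hence enough freedom in the $k$-subset action to realize the desired overlap. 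After $O(\log n)$ rounds the support is driven down to a constant; at that point an explicit argument (case-split on cycle structure, or a Jordan-type theorem forcing $\Alt(k)$ inside $\langle g_i, g_i^{w}\rangle$ for small $k$) produces a genuine $3$-cycle.

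To obtain the polynomial bound, each $w_i$ must have length $O(n^{c'})$: this is supplied by the Schreier-tree diameter bound for the induced $\langle S\rangle$-action on $k$-subsets. Combining the $O(\log n)$ rounds and the polynomial length of each $w_i$ yields a $3$-cycle at distance $O(n^{c_1})$ from the identity, which plugged into Stage~1 gives the stated bound $c_\varepsilon n^8$. The main obstacle will be the endgame of Stage~2: once $|\supp(g_i)|$ is small, the blanket commutator reduction must be replaced by a finer analysis, and making it work uniformly across all cycle structures of $g$ without inflating the exponent is the most delicate part of the proof.
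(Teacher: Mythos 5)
Your Stage~1 is fine and matches the standard reduction (also sketched in the paper just after Conjecture~1.1): repeatedly conjugating a $3$-cycle by $S$ gives all $3$-cycles as words of length $O(n^{3})$, and every element of $\Alt(n)$ is a product of $O(n)$ of these, with one extra multiplication to reach all of $\Sym(n)$.

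Stage~2, however, is not the argument of \cite{bbs} and, as written, has a fatal gap. You propose to choose a short word $w_i$ so that $\supp(g_i)^{w_i}$ overlaps $\supp(g_i)$ in exactly one or two prescribed points, and you claim the required polynomial length for $w_i$ ``is supplied by the Schreier-tree diameter bound for the induced $\langle S\rangle$-action on $k$-subsets.'' But with $k = |\supp(g_i)| = \Theta(n)$, the Schreier graph on $k$-subsets has $\binom{n}{k} = \exp(\Theta(n))$ vertices, so the Schreier-tree depth bound is exponential, not polynomial. There is no polynomial bound on the length of a word realizing a \emph{prescribed} image of a linear-sized subset; indeed, a random conjugate $A^{r}$ overlaps $A$ in about $k^2/n = \Theta(n)$ points, and asking for overlap exactly $1$ or $2$ is an exponentially rare event you have no tool to hit with a short word. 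Your explanation of where $\varepsilon<1/3$ enters (``room for a third disjoint copy'') is also off: for two $k$-subsets merely to be disjoint you need $2k\le n$, not $3k\le n$, so this cannot be the source of the threshold.

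The actual proof in \cite{bbs}, as summarized in the paper as (BBS1)--(BBS2), is probabilistic rather than prescriptive. One takes $b = a^{r}$ for a \emph{random} $r$ and shows that the commutator $[a,b]$ has support strictly smaller than $\supp(a)$ with positive probability. The threshold $\varepsilon<1/3$ arises from estimating the expected size of $\supp([a,b])$: the contributing points split into roughly three comparable families (the intersection $\supp(a)\cap\supp(b)$ and two families of points of one support that $a$ or $b$ maps into the other support), each of expected size about $\varepsilon\cdot|\supp(a)|$, so the expectation is about $3\varepsilon\,|\supp(a)|<|\supp(a)|$. Crucially, $r$ does not need to be uniform on $G$: it is enough that $r$ maps an ordered tuple of \emph{constant} length $\ell$ (here $\ell=3$) nearly uniformly, and this can be achieved by a lazy random walk of length $n^{O(\ell)}$ on the Schreier graph of ordered $\ell$-tuples, which has only $n_{(\ell)} = O(n^{\ell})$ vertices. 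This is precisely the role of the $\Omega_{(k)}$-graphs and Lemma~2.7 in this paper. The iteration then gives a roughly quadratic decrease $\delta\mapsto c\delta^{2}$ of the support density and hence only $O(\log\log n)$ rounds, not $O(\log n)$. So the gap in your plan is the use of the $k$-subset action with $k=\Theta(n)$; replacing it with the constant-$\ell$ tuple action and a probabilistic (rather than deterministic positioning) argument is the essential idea you are missing.
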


In this paper we provide a variant of the argument in~\cite{bbs} to
prove the following stronger theorem.  

\begin{thm}\label{main1}
Let $C=0.63$. 
There exists $c>0$ such that, if $G=\Sym(n)$ or $\Alt(n)$ and $S$ is a set of generators of $G$ containing an element $g$ with $|\supp(g)| \le Cn$, then 
$$\diam(\Gamma(G,S)) \le O(n^{c}).$$
\end{thm}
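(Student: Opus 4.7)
The plan is to follow the Babai--Beals--Seress framework of Theorem~\ref{t: bbs} and strengthen its central support-reduction lemma. Since Theorem~\ref{t: bbs} is available as a black box as soon as any element of support less than $n/3$ lies within polynomial distance of the identity in $\Gamma(G,S)$, the task reduces to producing such an element from $g$ (which has $|\supp(g)| \le 0.63 n$) by an $S$-word of length polynomial in $n$.

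The engine for this is iterated commutator reduction. Write $A := \supp(g)$ of size $\alpha n$ with $\alpha \le 0.63$. I would find a short word $h$ in $S$ for which $B := h^{-1} A = \supp(g^h)$ meets $A$ in close to the minimum possible number of points, namely $(2\alpha-1)n$. A direct case analysis of when $g$ and $g^h$ commute at a point shows
\[
\supp([g, g^h]) \subseteq (A \cap B) \cup \bigl(A \cap g^{-1}(B)\bigr) \cup \bigl(B \cap (g^h)^{-1}(A)\bigr);
\]
since $g$ permutes $A$ internally and $g^h$ permutes $B$ internally, each of the three pieces on the right has cardinality at most $|A \cap B|$. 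Hence one commutator step drives $|\supp|$ from $\alpha n$ to at most $c(2\alpha-1)n$ plus lower-order boundary corrections, for a small absolute constant $c$, and iterating a bounded number of times brings the support strictly below $n/3$. Realising $h$ itself as an $S$-word of polynomial length is handled by a Babai--Seress style estimate on the Schreier diameter of $G$ in its action on $\alpha n$-subsets of $\Omega$: because $G = \Sym(n)$ or $\Alt(n)$ is transitive on such subsets, the required conjugator exists in $G$ and can be written as a product of polynomially many elements of $S \cup S^{-1}$.

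The chief obstacle is the sharp bookkeeping needed for the iteration to converge at $\alpha$ as large as $0.63$. A crude bound $|\supp([g,g^h])| \le 3|A \cap B|$ only yields the threshold $\alpha < 3/5$ via the fixed-point analysis of $\alpha \mapsto 3(2\alpha-1)$. Pushing the threshold up to $0.63$ requires savings in the two boundary pieces $A \cap g^{-1}(B)$ and $B \cap (g^h)^{-1}(A)$, each of which consists of points $x$ where $g$ (respectively $g^h$) sends $x$ across the boundary of $A \cap B$. Choosing $h$ so that $A \cap B$ is close to a union of $g$-cycles makes these boundary contributions negligible and effectively shrinks the iteration multiplier towards the ideal $\alpha \mapsto 2(2\alpha-1)$, whose fixed point is $2/3$. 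The constant $C = 0.63$ leaves a small safety margin below $2/3$ absorbing the residual boundary loss and any polynomial slack incurred in constructing $h$; managing these quantities optimally is the crux of the argument, and it is exactly here that the variant of~\cite{bbs} is strengthened to give the new threshold.
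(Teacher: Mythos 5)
The central step of your argument is to find a short $S$-word $h$ for which $\supp(g)$ and $\supp(g^h)$ overlap near-minimally, and for this you invoke ``a Babai--Seress style estimate on the Schreier diameter of $G$ in its action on $\alpha n$-subsets.'' No such estimate exists. That Schreier graph has $\binom{n}{\alpha n}$ vertices---exponentially many---and there is no known polynomial bound on its diameter for arbitrary generating sets $S$: proving one would be a strong new theorem, not a citation. The machinery that is actually available (Lemma~\ref{lemmaMarkovChain}) controls the Schreier diameter on $k$-\emph{tuples} for \emph{bounded} $k$, where the vertex set $\Omega_{(k)}$ has polynomial size $n_{(k)}\le n^k$. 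That is precisely why the paper takes a conjugator $r$ realised by a lazy random walk on $X_k$ with $k\le 26$, accepting the near-uniform distribution this induces rather than aiming deterministically at an optimal conjugate. For such $r$, the typical overlap is $|\supp(a)\cap\supp(a^r)|\approx\alpha^2 n$, not $(2\alpha-1)n$, and then your crude union bound $|\supp([a,b])|\le 3|A\cap B|$ only recovers the original BBS threshold $\alpha<1/3$.

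Several secondary issues compound this. Even granting an optimal $h$, the union bound gives $\alpha<3/5$, not $0.63$; and your ``boundary savings'' argument---choosing $h$ so that $A\cap B$ is close to a union of $g$-cycles---asks $h$ to satisfy a second structural constraint whose compatibility with simultaneously minimizing $|A\cap B|$ is not addressed, and does not by itself reduce the multiplier to the claimed $2$. (For comparison, the paper extracts only $C<1/2$ from the bare commutator $[a,b]$ even with its refined probabilistic fixed-point count, and reaching $C=0.63$ required passing to the word $w_0=[\alpha,\beta^{-1}][\alpha,\beta]$ raised to the $60$th power together with the full $\alpha\beta$-tree count $f(\delta)$ of~\eqref{bigfeller}.) You also never rule out $[g,g^h]=1$, which would stall the iteration; the paper handles this by prescribing the action of $r$ on a small set $\Lambda$ so that $w(a,a^r)$ contains a $7$-cycle (Lemma~\ref{silly}), with a separate treatment of the degenerate case $|a|=2^x3^y$.
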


We do not try to minimize the exponent $c$ in the theorem. Our arguments give $c\le 78$, but with some more work the bound on $\diam(\Gamma(G,S)) $ can be improved to at least $O(n^{66})$.

Theorem~\ref{main1} also extends to directed graphs. Given $G=\langle S \rangle$, the {\em directed Cayley graph}
$\vec{\Gamma}(G,S)$ is the graph with vertex set $G$ and edge set $\{ (g,sg)
: g \in G, s \in S \}$. Analogously to the undirected case, the diameter of $\vec{\Gamma}(G,S)$ is defined as the maximum (taken over $g \in G$) of the shortest expression $g=s_1\cdots s_m$, with each $s_k \in S$. By a theorem of Babai~\cite[Corollary~2.3]{babai},
$\diam(\vec{\Gamma}(G,S)) = O\left(\diam(\Gamma(G,S) ) \cdot (\log |G|)^2 \right)$ 
holds for all groups $G$ and sets $S$ of generators, so we immediately obtain the following corollary.

\begin{cor}
\label{directed version}
Let $C=0.63$. 
There exists $d>0$ such that, if $G=\Sym(n)$ or $\Alt(n)$ and $S$ is a set of generators of $G$ containing an element $g$ with $|\supp(g)| \le Cn$, then
$$\diam(\vec{\Gamma}(G,S)) \leq O(n^{d}).$$
\end{cor}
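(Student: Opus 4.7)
The plan is to deduce the corollary as a formal consequence of Theorem~\ref{main1} together with the Babai bound $\diam(\vec{\Gamma}(G,S)) = O\bigl(\diam(\Gamma(G,S)) \cdot (\log|G|)^2\bigr)$ already quoted in the paragraph preceding the statement. Since the hypotheses of Theorem~\ref{main1} and of the corollary coincide, and since $S$ generates $G$ in either setting, both inputs apply verbatim.

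First, I would invoke Theorem~\ref{main1} to obtain some absolute constant $c>0$ (the paper asserts $c\le 78$) such that
$$\diam(\Gamma(G,S)) = O(n^{c}).$$
Next, I would feed this into Babai's theorem~\cite[Corollary~2.3]{babai} to get
$$\diam(\vec{\Gamma}(G,S)) = O\bigl(n^{c} \cdot (\log|G|)^{2}\bigr).$$
Finally, using $|G|\le n!$, we have $\log|G| = O(n\log n)$, so $(\log|G|)^2 = O(n^{2}\log^{2} n) = O(n^{3})$; the logarithmic factor is absorbed into a mildly larger polynomial exponent. Combining gives $\diam(\vec{\Gamma}(G,S)) = O(n^{c+3})$, so the choice $d := c+3$ (or any slightly larger constant) works.

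There is essentially no genuine obstacle: the argument is purely a compounding of two black-box inequalities, and all the real content sits inside Theorem~\ref{main1}. The only minor bookkeeping is to verify that the hypothesis of the cited Babai result—namely that $S$ generates $G$—holds in our setting, which is immediate from the corollary's assumptions, and to check that the extra $(\log n)^{2}$ factor can be swallowed by any exponent strictly greater than $c+2$.
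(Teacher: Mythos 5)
Your argument is exactly the paper's: combine Theorem~\ref{main1} with Babai's bound $\diam(\vec{\Gamma}(G,S)) = O\bigl(\diam(\Gamma(G,S)) \cdot (\log|G|)^2\bigr)$ and absorb the polylogarithmic factor (via $\log|G| = O(n\log n)$) into a slightly larger polynomial exponent. The paper treats this as an immediate consequence and gives no separate proof, so your spelled-out bookkeeping is correct and matches the intended derivation.
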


We note that for arbitrary sets of generators the best known bound is quasipolynomial, by a recent result of Helfgott and Seress:

\begin{thm}[{\cite{aph}}]\label{hsmain}
For $G=\Alt(n)$ and $\Sym(n)$, $\diam(G) =\exp(O( (\log n)^4 \log\log n))$.
\end{thm}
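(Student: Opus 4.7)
The overall strategy is to produce, using words of quasipolynomial length in $S \cup S^{-1}$, a non-identity element $g$ with $|\supp(g)| \leq Cn$ for some constant $C<1/3$, and then invoke Theorem~\ref{t: bbs} to write each target element of $G$ in further polynomial length. The whole quasipolynomial budget is thus spent on the \emph{construction} of such a small-support element.

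I would split the construction into two nested recursions. The outer recursion handles the permutation-group structure: using the O'Nan--Scott theorem, reduce to the case where $\langle S\rangle$ acts primitively on $\Omega$, handling intransitivity and imprimitivity by descending to an orbit action or a block action, each such descent costing at most a polylogarithmic multiplicative overhead in word length. Classical minimal-degree results (Bochert, Liebeck--Saxl) then force every non-identity element to move at least $\alpha n$ points for some absolute constant $\alpha>0$, and in particular ensure that enough ``useful'' conjugators exist inside short products. The inner recursion is a \emph{support-reduction} loop: given an element $g$ with support of size $s$, I would find a short element $h$ so that $\supp(g)$ and $h\cdot \supp(g)$ overlap properly and substantially; the commutator $[g, hgh^{-1}]$ then has strictly smaller support, and $O(\log n)$ iterations drive the support below $Cn$.

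The hardest step is constructing the conjugator $h$ at each iteration as a word of length polylogarithmic in $n$, in such a way that the resulting commutator is \emph{non-trivial}. This requires a ``splitting'' or ``routing'' lemma that carries chosen points of $\Omega$ to other chosen points via short words in $S \cup S^{-1}$, together with random-product arguments in $\Sym(n)$ that rule out accidental collapse of the commutator. Combined with the outer recursion on the degree, the polylogarithmic overhead at each iteration accumulates to a word length of the claimed form.

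The main obstacle is keeping all these quantitative controls in play simultaneously: short conjugators, guaranteed non-triviality, bounded depth of recursion, and compatibility with the various families in the O'Nan--Scott classification (especially the product-action and diagonal types, which carry the known counterexamples to naive growth statements for $\Alt(n)$). Matching the exponent $(\log n)^4 \log\log n$ precisely requires carefully tracking each polylog factor through both recursions; the important qualitative point is that each nested loop contributes only polylogarithmically, so that a quasipolynomial total length is achievable without resolving Conjecture~\ref{babaiconj}.
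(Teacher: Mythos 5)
This theorem is only \emph{cited} in the paper (from Helfgott--Seress \cite{aph}); the present paper gives no proof of it, so there is no internal proof to compare against, and the comparison has to be against \cite{aph} itself.

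Your overall frame---produce a non-identity element of small support in quasipolynomial word length, then finish with Theorem~\ref{t: bbs}---is the correct entry point, and some of your ingredients (working with tuple actions, a ``routing'' lemma to move prescribed points, care about non-triviality of commutators, handling imprimitivity by passing to orbit and block actions) do appear in some form in \cite{aph}. But the centrepiece of your inner loop does not work, and the gap is exactly where the real difficulty lives. You propose to shrink $\supp(g)$ by passing to $[g, hgh^{-1}]$ for a well-chosen short $h$ and to iterate $O(\log n)$ times. The commutator $[g,g^h]$ fixes at least $\Omega\setminus(\supp(g)\cup\supp(g)^h)$, and for a generic conjugator the support of $[g,g^h]$ is roughly $(2\delta-\delta^2)n$ when $|\supp(g)|=\delta n$. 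That is \emph{larger} than $\delta n$ whenever $\delta>0$; the support-reduction effect of the commutator, even exploiting the internal cancellations analysed in this very paper, only kicks in when $\delta$ is already below a constant bounded away from $1$ (the present paper pushes the threshold to $0.63$, and Section~\ref{sec: closing} explains why any power of $w_0$ stalls near $\delta_0\approx 0.642$). Yet the whole problem is that the given generators may all have support close to $n$: an $n$-cycle and a transposition, say. Your ``choose $h$ so that $\supp(g)$ and $\supp(g)^h$ overlap substantially'' does not bridge this, because even $\supp(g)^h=\supp(g)$ exactly gives a commutator whose support is contained in $\supp(g)$ but not provably smaller, and the careful local analysis needed to extract actual shrinkage is precisely what limits the method to $\delta<\delta_0<1$.

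The genuinely hard step in \cite{aph}, and the one your sketch omits, is getting \emph{from} support comparable to $n$ \emph{down to} the regime where Theorem~\ref{t: bbs}-type arguments apply. Helfgott and Seress do this not by a single commutator loop but by a multi-level recursion combining two competing processes: (i) growth of orbits on strings (tuples of increasing length) under words of controlled length, proved via a ``splitting'' lemma, and (ii) descent to large point/tuple stabilizers when such growth fails, invoking structure theory (Cameron/Babai--Cameron--P\'alfy rather than minimal-degree bounds in the Bochert--Liebeck--Saxl sense) to control the stabilizers that arise. Both processes recur, and it is this nested recursion that produces the $(\log n)^4\log\log n$ in the exponent. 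Note also that your own accounting betrays the gap: $O(\log n)$ descent or reduction steps with a polylogarithmic multiplicative cost each would give $\exp(O(\log n\cdot\log\log n))$, which would be a \emph{much stronger} bound than the theorem claims. That the known bound is $\exp(O((\log n)^4\log\log n))$ and not $\exp(O(\log n\log\log n))$ is a signal that the actual recursion is substantially deeper and more delicate than a single $O(\log n)$-iteration commutator loop.
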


The machinery developed in this paper turns out to have application to other questions within permutation group theory. Indeed it is possible to use variants of the results given in Section~\ref{sec: machine} to recover, and strengthen, classical results concerning multiply transitive groups due to Manning \cite{manning1, manning2, manning3} and Wielandt \cite{wielandt2}. This will be the subject of a forthcoming paper \cite{ghss}.

\subsection{The main ideas}
It is well-known and easy to see that if a set $A$ of generators of $G=\Alt(n)$ or $\Sym(n)$ contains a $3$-cycle $t$ then every element of $G$ can be written as a word of length less than $n^4$ in $A$. Indeed, repeatedly conjugating $t$ by $A$ gives all 3-cycles as words of length less than $n^3$, and each element of $\Alt(n)$ is a product of at most $\lfloor n/2 \rfloor$ $3$-cycles. Finally, if $G=\Sym(n)$ then one more multiplication by $A$ gives words for all elements of $G$. Thus, given any set $S$ of generators of $G$, in order to prove Conjecture~\ref{babaiconj}, it is enough to construct a 3-cycle as a word in $S$ of polynomial length. 

We may try to reach a 3-cycle in stages, by constructing elements of smaller and smaller support. Up to very recently, the only subexponential method to obtain an element of support less than $cn$, for some constant $c<1$, from arbitrary generating sets was in \cite{babaiseress88}. In that paper, iteration of the support reduction was utilized to prove $\diam(G) =\exp( (1+o(1))\sqrt{n \log n} )$, the only subexponential bound until Theorem~\ref{hsmain}.

Theorem~\ref{t: bbs} may be interpreted as a reduction of Conjecture~\ref{babaiconj} for alternating groups, to the problem of constructing an element $g$ of moderately small support as a short word in an arbitrary set $S$ of generators. The proof of Theorem~\ref{t: bbs} in \cite{bbs} is based on the following observations.
\begin{itemize}
\item[(BBS1)] If $|\supp(a)|<\varepsilon n$ for some $\varepsilon <1/3$, $a \in G$, $G=\Alt(n)$ or $\Sym(n)$, and $r$ is a random element of $G$ then, for $b=a^r$, the commutator $[a,b]=a^{-1}b^{-1}ab$ has support smaller than $a$ with positive probability.
\item[(BBS2)] In (BBS1), it is not necessary that $r$ is a uniformly distributed random element of $G$. It is enough that, for some constant $\ell$, $r$ maps a sequence of distinct elements of length $\ell$ from the permutation domain nearly uniformly to all other sequences. Furthermore, random words $r$ on any set $S$ of generators of $G$, of length $n^{O(\ell)}$, satisfy this property.
\end{itemize}
In \cite{bbs}, the number $\ell=3$ was chosen and a 3-cycle was constructed in $O(\log\log n)$ applications of (BBS1). A major conceptual novelty of \cite{bbs} is that besides the natural action of $G$ on $n$ points and the action of $G$ on itself as in $\Gamma(G,S)$, it is beneficial to work with other actions of $G$. This principle is more clearly formulated in \cite{aph}. In \cite{bbs} and in the present paper, the action of $G$ on sequences of length $\ell$ from the natural permutation domain is used, while in \cite{aph} other actions are utilized as well. 

Chronologically, we made three improvements to the argument in \cite{bbs}.
\begin{itemize}
\item[(NEW1)] The conclusion of (BBS1) holds for $\varepsilon < 1/2$, implying a version of Theorem~\ref{main1} with $C<0.5$.
\item[(NEW2)] With positive probability, the commutator $[a,b]$ has many fixed points, and also contains a significant number of $3$-cycles. Thus, if $|\supp(a)|<\varepsilon n$ for some $\varepsilon < 0.585$, then $[a,b]^3$ has support smaller than $a$. This implies Theorem~\ref{main1} with $C=0.585$.
\item[(NEW3)] With positive probability, the permutation $[a,b^{-1}][a,b]$ has many fixed points and $2$-, $3$-, $4$- and $5$-cycles. So, for $\varepsilon \le 0.63$, $([a,b^{-1}][a,b])^{60}$ has support smaller than $a$. 
\end{itemize}
In this paper we only prove the strongest version based on (NEW3). 
We have to overcome several technical difficulties: (i) the analysis of the local behaviour (i.e., finding how $a$ and $b$ should interact on small subsets $\Delta$ of the natural domain such that $[a,b^{-1}][a,b]$ forms a short cycle on some points of $\Delta$); (ii) ensuring that $([a,b^{-1}][a,b])^{60}$ is not the identity of $G$; and (iii) handling the special case when the originally given generator $a$ has order $2^x3^y$ for some $x,y \ge 0$. We shall apply the argument of (BBS2) with $\ell=26$.

The structure of this paper is as follows. In Section~\ref{basic concepts}, we collect basic concepts regarding groups and graphs, and introduce the central notion of {\em $\alpha\beta$-trees}. These are the objects describing the possible local interactions of $a$ and $b$. We also introduce our probabilistic method. In Section~\ref{sec: machine} we give a graph-theoretic technique for estimating the number of fixed points of $w(a,b)$, where $w(\alpha,\beta)$ is a reduced word in $\alpha$ and $\beta$, and $a$ and $b$ are particular conjugate permutations of $\Sym(\Omega)$. In Section~\ref{sec3}, we apply the results of Section~\ref{sec: machine} to the word $[\alpha,\beta^{-1}][\alpha,\beta]=\alpha^{-1}\beta\alpha\beta^{-1}\alpha^{-1}\beta^{-1}\alpha\beta$ and we prove Theorem~\ref{main1}. In Section~\ref{sec: closing} we discuss some possible extensions of Theorem~\ref{main1}.

\subsection{Acknowledgements}

All authors would like to thank Gordon Royle for useful discussions in the initial phase of this research. In addition expenses for N.G. and H.H. to visit the University of Western Australia were paid for by a UWA Research Collaboration Award which was awarded to a team including Prof. Royle. We are, therefore, doubly grateful to Prof. Royle for without this financial support it is unlikely that this research would have been undertaken.

A.S is supported in part by the NSF and by ARC Grant DP1096525. N.G. was a frequent visitor to the University of Bristol during the period of this research and is grateful for the generous support he received from the maths department there.

\section{Basic concepts}\label{basic concepts}

In this section we collect definitions and basic results that will be needed in the proof of Theorem~\ref{main1}. 

\subsection{Permutation groups}\label{s: perm group} 
Let $\Omega = \{ 1,2,\ldots,n\}$. We use $\Sym(n)$ and $\Sym(\Omega)$ interchangeably; more exactly, we use $\Sym(\Omega)$ when we emphasise the action of $\Sym(n)$ on $\Omega$. 
Let $S \subseteq  \Sym(n)$ and $k, l\in\mathbb{Z}^+$. Define
\begin{equation*}
S^\ell=\{s_1\cdots s_\ell \, \mid \, s_1, \dots, s_\ell\in S\}; \, \, \, \,  S^{-1}=\{ s^{-1} \mid s \in S \}.
\end{equation*}
For $\omega \in \Omega$ and $a,g \in \Sym(n)$, we write $\omega^g$ for the image of $\omega$ under $g$; and $a^g$ for $g^{-1}ag$.
We denote by $\Omega_{(k)}$ the set of $k$-tuples of
distinct elements of $\Omega$ and we write $n_{(k)}=|\Omega_{(k)}|=n(n-1)\cdots
(n-k+1)$.

 We shall use the following result of J.~Whiston~\cite{whiston}.

\begin{lemma}[{\cite{whiston}}]\label{reduce genset}
Any set $S$ of generators for $G=\Sym(n)$ or $\Alt(n)$ contains a subset $A$ of cardinality less than or equal to $n-1$ that also generates $G$.
\end{lemma}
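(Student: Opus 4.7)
The plan is to greedily reduce $S$ to an irredundant generating set and then invoke the fact that $\Sym(n)$ and $\Alt(n)$ admit no irredundant generating set of size exceeding $n-1$.

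First I would apply a greedy minimization: while there exists some $s \in S$ with $s \in \langle S \setminus \{s\}\rangle$, remove $s$. This procedure terminates (since $|S|$ is finite and strictly decreasing at each step) with a subset $A \subseteq S$ that still generates $G$ and is \emph{irredundant}, in the sense that $\langle A \setminus \{a\}\rangle$ is a proper subgroup of $G$ for every $a \in A$. It then suffices to prove that $|A| \leq n - 1$ for any irredundant generating set $A$ of $G$.

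I would argue this bound by induction on $n$. For each $a \in A$, the proper subgroup $H_a := \langle A \setminus \{a\}\rangle$ is contained in some maximal subgroup $M_a$ of $G$, and by the O'Nan--Scott theorem $M_a$ is either intransitive, imprimitive, or primitive. In the intransitive case, $M_a$ stabilises a nontrivial partition $\Omega = \Delta_1 \sqcup \Delta_2$ with $|\Delta_i| < n$, and projecting $A \setminus \{a\}$ onto each factor of $\Sym(\Delta_1) \times \Sym(\Delta_2)$ would let me apply the inductive hypothesis after a further greedy pruning on each side. The imprimitive case is handled analogously by projecting onto the block-permutation action and the block-stabiliser action, both of degree strictly less than $n$.

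The main obstacle is the primitive case: here $M_a$ is a primitive maximal subgroup of $\Sym(n)$ or $\Alt(n)$ and one cannot reduce to smaller degree by such a projection. Handling it will require the Liebeck--Praeger--Saxl classification of primitive maximal subgroups (which in turn rests on the classification of finite simple groups), together with a case-by-case analysis ruling out the possibility that extending such an $M_a$ by a single additional element can yield all of $G$ while preserving the irredundancy of $A$. This is precisely the content of Whiston's argument in \cite{whiston}, and it produces the sharp bound $|A| \leq n-1$, which is attained by $A = \{(1\,2), (1\,3), \ldots, (1\,n)\}$ in $\Sym(n)$.
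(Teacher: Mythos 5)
The paper itself gives no proof of this lemma; it is stated purely as a citation of Whiston's theorem, so the relevant comparison is between your sketch and Whiston's actual argument. Your first step (greedy removal of redundant generators to reach an irredundant generating subset $A\subseteq S$, then bounding $|A|$) is correct and is the standard glue connecting ``any generating set'' to Whiston's theorem about irredundant generating sets; making it explicit is a genuine improvement over the paper's bare citation.

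Your sketch of the bound itself, however, has a real gap in the intransitive and imprimitive cases. If $M_a$ stabilises $\Omega = \Delta_1 \sqcup \Delta_2$, then projecting $A \setminus \{a\}$ to $\Sym(\Delta_1)$ and $\Sym(\Delta_2)$ does not preserve irredundancy, and the projected sets need not generate the full symmetric groups on $\Delta_1$ and $\Delta_2$. After ``further greedy pruning'' the two surviving subsets of $A\setminus\{a\}$ can overlap or be disjoint in uncontrolled ways, so the naive bound $|A| - 1 \le (|\Delta_1|-1) + (|\Delta_2|-1)$ is not justified; a single element could be indispensable in one projection but discarded in the other, and you lose track of $|A|$. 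Whiston's proof handles this via a more delicate analysis (involving the structure of the intersections $\bigcap_a H_a$, chains of subgroups, and a careful treatment of the transitive and primitive cases using CFSG-dependent classification results), not by the direct two-sided projection you describe. You do flag this by deferring to Whiston for the details, which is consistent with what the paper does; but as a freestanding argument the inductive step is not yet sound, so the proposal should be read as ``greedy reduction plus citation'' rather than as a proof outline of the cited bound.
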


If $A$ and $B$ are two sets of generators for $G$ with $A \subseteq B$ then clearly $\diam(\Gamma(G,B)) \le \diam(\Gamma(G,A))$. Therefore, Lemma~\ref{reduce genset} implies that it is enough to prove Theorem~\ref{main1} for sets $S$ of generators of size at most $n$ which contain an element $g$ of small support.

\subsection{Graphs} \label{s: graphs}
In this paper a {\em graph} $X$ is a finite connected {\em directed} graph. Moreover, we allow loops on the vertices of $X$ and multiple edges. We do not assume that $X$ is {\em strongly connected}, i.e., it is possible that there is no directed path between some vertices $x$ and $y$ of $X$. We write $V(X)$ for the set of vertices of $X$ and $E(X)$ for the set of edges of $X$. An edge $e$ running from vertex $i$ to  vertex $j$ will be written $(i,j)$ but we warn that this notation is ambiguous as there may be more than one such edge.

We define an $\alpha\beta$-\emph{graph}, $T$ say, to be a graph together with a label, $\alpha$ or $\beta$, attached to every edge. We require that for each vertex $v \in V(T)$ and for each $\gamma \in \{ \alpha,\beta\}$, $v$ is incident with at least one edge labelled by $\gamma$. 
\footnote{We impose this condition because it is necessary in some of the arguments used in Section \ref{sec: machine}. It is not {\it a priori} necessary to our strategy for analysing the fixed points of words; this condition limits our implementation to those reduced words $w$ on $\{\alpha, \beta, \alpha^{-1}, \beta^{-1}\}$ for which all exponents are equal to $\pm 1$.}
We also require that at most one edge starting at $v$ is labelled by $\gamma$ and at most one edge ending at $v$ is labelled by $\gamma$. Here a loop at $v$ counts as one incoming and one outgoing edge for $v$. Notice that all vertices of an $\alpha\beta$-graph have in-degree at most $2$ and out-degree at most $2$. For $\gamma \in \{ \alpha,\beta \}$, we define $T_\gamma$ to be the subgraph of $T$ with vertex set $V(T)$ and edge set the set of edges of $T$ labelled by $\gamma$. 

We say that a cycle $C$ of $T$ is {\em monochromatic} if all of its edges
are labelled $\alpha$ (resp. $\beta$), that is, $C$ is a sequence of vertices $(v_1,\ldots,v_r,v_{r+1})$ with $v_{r+1}=v_1$ and $r\geq 2$, and
where for each $i\in \{1,\ldots,r\}$ the ordered pair $(v_i,v_{i+1})$
is an edge of $T$ labelled $\alpha$ (resp. $\beta$). 

Given permutations $a,b\in\Sym(\Omega)$ and an injective map $\iota:V(T)\to \Omega$, we say that $T_\alpha$ is {\em hosted by} $(\iota, a)$ if $(x\iota)^a=y\iota$ for each edge or loop $(x,y)\in E(T_\alpha)$. Similarly,  $T_\beta$ is {\em hosted by} $(\iota, b)$ if $(x\iota)^b=y\iota$ for each edge or loop $(x,y)\in E(T_\beta)$. Finally, $T$ is {\em hosted by} $(\iota,a,b)$ if $T_\alpha$ is hosted by $(\iota, a)$ and $T_\beta$ is hosted by $(\iota, b)$.

Let $T$ be an $\alpha\beta$-graph. Observe that for $\gamma \in \{ \alpha,\beta \}$, the connected components of $T_\gamma$ are of three types: 
\begin{enumerate}
 \item $\gamma$-\emph{loops}: isolated vertices, namely the vertices $v$ of $T$ having a loop at $v$ labelled with $\gamma$;
\item $\gamma$-\emph{cycles}: monochromatic cycles all of whose edges are labelled $\gamma$;
\item $\gamma$-\emph{paths}: maximal directed paths such that all edges are labelled with $\gamma$.
\end{enumerate}
We denote by $l_\gamma(T)$ the number of $\gamma$-loops and by $p_\gamma(T)$ the number of $\gamma$-paths and $\gamma$-cycles. 

We say that an $\alpha\beta$-graph is an $\alpha\beta$-{\em tree} if all undirected cycles are monochromatic (and so necessarily they are also directed cycles). Note that an $\alpha\beta$-tree may not be a tree in the usual graph-theoretic sense. The following result will be crucial.

\begin{lemma}\label{tree}
Let $T$ be an $\alpha\beta$-graph. Then
$p_\alpha(T)+p_\beta(T)+l_\alpha(T)+l_\beta(T)\leq |V(T)|+1$. Moreover, equality holds if and only if $T$ is an $\alpha\beta$-tree. 
\end{lemma}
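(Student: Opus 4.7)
The plan is to do an Euler-characteristic computation for each monochromatic subgraph $T_\gamma$ and then combine the two using the cycle-space structure of the ambient graph $T$.

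First, I observe that the hypotheses on $T$ force each $T_\gamma$ (for $\gamma\in\{\alpha,\beta\}$) to have in-degree and out-degree at most one at every vertex, and, crucially, to have \emph{no} isolated vertices (this is exactly the stipulation that every vertex is incident to at least one $\gamma$-labelled edge). Consequently the undirected connected components of $T_\gamma$ are precisely its $\gamma$-loops, $\gamma$-cycles, and $\gamma$-paths, which yields
$$\text{(number of components of }T_\gamma\text{)}=l_\gamma(T)+p_\gamma(T).$$
On the other hand, for any graph $G$ on $V$ vertices and $E$ edges, Euler's formula gives
$$\text{(number of components of }G\text{)}=V-E+\mu(G),$$
where $\mu(G)$ denotes the first Betti number (dimension of the cycle space over $\mathbb{F}_2$). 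Applying this to $T_\alpha$ and $T_\beta$ and summing yields
$$l_\alpha(T)+p_\alpha(T)+l_\beta(T)+p_\beta(T)=2|V(T)|-|E(T)|+\mu(T_\alpha)+\mu(T_\beta).$$

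Next, since $T$ is connected, $\mu(T)=|E(T)|-|V(T)|+1$, so the inequality of the lemma is equivalent to $\mu(T_\alpha)+\mu(T_\beta)\leq \mu(T)$. View the cycle spaces $C(T_\alpha),C(T_\beta)\subseteq C(T)$ as $\mathbb{F}_2$-subspaces. Because $T_\alpha$ and $T_\beta$ have disjoint edge sets, $C(T_\alpha)\cap C(T_\beta)=0$, so
$$\mu(T_\alpha)+\mu(T_\beta)=\dim\bigl(C(T_\alpha)+C(T_\beta)\bigr)\leq \mu(T),$$
establishing the inequality.

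For the equality statement, I note that $C(T_\alpha)+C(T_\beta)=C(T)$ if and only if every simple undirected cycle of $T$ lies in one of the summands, i.e.\ is monochromatic: if a simple cycle $C$ contained both an $\alpha$- and a $\beta$-edge, then at some vertex $v\in V(C)$ one incident edge of $C$ would be $\alpha$-labelled and the other $\beta$-labelled, so the indicator vector of $C$ could not be written as $A+B$ with $A\in C(T_\alpha)$ and $B\in C(T_\beta)$ (such an $A$ would have odd degree at $v$). Since simple cycles span $C(T)$, this characterises equality as the condition that every undirected cycle of $T$ is monochromatic, i.e.\ that $T$ is an $\alpha\beta$-tree.

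I do not anticipate a serious obstacle; the argument is essentially linear algebra on cycle spaces. The only delicate point is to make proper use of the no-isolated-vertex condition in $T_\gamma$, without which the equality $l_\gamma+p_\gamma=$ (components of $T_\gamma$) would fail and the Euler bookkeeping would be off by the number of isolated vertices in each colour class.
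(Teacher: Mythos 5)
Your proof is correct, and it takes a genuinely different route from the paper's. The paper constructs an auxiliary bipartite multigraph $B$ whose vertices are the connected components of $T_\alpha$ and $T_\beta$, joining two components when they share a vertex of $T$; since each $v\in V(T)$ lies in exactly one $\alpha$-component and one $\beta$-component, $|E(B)|=|V(T)|$, and the bound $|V(B)|\le|E(B)|+1$ for the connected graph $B$ gives the inequality, with equality iff $B$ is a tree, which the paper then translates into the no-non-monochromatic-cycle condition. You instead compute Euler characteristics of $T_\alpha$ and $T_\beta$ separately and reduce the lemma to the statement $\mu(T_\alpha)+\mu(T_\beta)\le\mu(T)$, which you prove by noting that $C(T_\alpha)$ and $C(T_\beta)$ are transverse subspaces of the cycle space $C(T)$ over $\mathbb F_2$ (they have disjoint edge supports); equality becomes $C(T_\alpha)+C(T_\beta)=C(T)$, which you correctly identify with the condition that simple cycles (a spanning set for $C(T)$) are monochromatic, via the parity-of-degree obstruction at a color-change vertex. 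Both arguments hinge on the same structural input (every vertex of $T$ is incident with at least one $\gamma$-edge, hence $T_\gamma$ has no isolated vertices and its components are exactly the $\gamma$-loops, -paths, and -cycles). The paper's auxiliary-graph argument is more elementary and self-contained; your cycle-space argument packages the inequality as a transparent linear-algebra fact and makes the equality case follow from the spanning property of simple cycles, which is arguably slicker once one is comfortable with the $\mathbb F_2$ cycle space of a multigraph with loops.
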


\begin{proof}
Let $B$ be the graph with vertex set the set of $\alpha$-loops, $\alpha$-paths, $\alpha$-cycles, $\beta$-loops, $\beta$-paths and $\beta$-cycles of $T$. We declare two distinct vertices $x$ and $y$ of $B$ adjacent if there exists a vertex $v$ of $T$ such that $v$ is incident with both $x$ and $y$. By construction, $B$ is bipartite (with the $\alpha$-objects comprising one class of the bipartition and the $\beta$-objects the other) and $|V(B)|=p_\alpha(T)+p_\beta(T)+l_\alpha(T)+l_\beta(T)$. By the definition of $\alpha\beta$-graphs, each vertex $v$ of $T$ is incident with exactly one component of $T_\alpha$ and with exactly one component of $T_\beta$. Hence $v$ defines exactly one edge in $B$ and so $|E(B)|=|V(T)|$. Since $T$ is connected, the graph $B$ is connected and so $|V(B)|\leq |E(B)|+1=|V(T)|+1$, proving the first claim.

Observe that if $T$ contains a non-monochromatic cycle, then $B$ contains a cycle. Conversely if $B$ contains a cycle, then $T$ must contain a non-monochromatic cycle. We conclude that $B$ is a tree if and only if $T$ has no non-monochromatic cycles, and the second claim follows from the standard fact that $B$ is a tree if and only if $|V(B)|=|E(B)|+1$.
\end{proof}

We end this subsection with some more definitions. For an $\alpha\beta$-graph $T$ and for $0<\delta<1$, we define $\delta_T := (1-\delta)^{l_\alpha(T)+l_\beta(T)}\delta^{p_\alpha(T)+p_\beta(T)}$. An {\em isomorphism} between $\alpha\beta$-graphs $T_1$ and $T_2$ is defined to be a bijection $\varphi: V(T_1) \to V(T_2)$ that preserves edges and edge-labels. If, for some $x,y \in V(T_1)$, there are two directed edges from $x$ to $y$ in $T_1$ (with necessarily different labels) then in $T_2$ there are also two directed edges from $x\varphi$ to $y\varphi$. We define an \emph{automorphism} of $T$ to be an isomorphism between $T$ and itself. The set of all such permutations is the \emph{automorphism group} $\Aut(T)$.

\subsection{Words and graphs}\label{s: words and graphs} Let $T$ be an $\alpha\beta$-graph and $w=w_1w_2\cdots w_k$ be a reduced word with $w_1,\dots, w_k\in \{\alpha,\alpha^{-1},\beta, \beta^{-1}\}$. We say
that $T$ {\em admits} $w$, if there exists a vertex $x$ of $T$ such that by starting at
$x$ and by tracing the edges and the loops of $T$ with labels $(w_1,w_2,\ldots,w_k)$,
we visit {\em all} vertices and edges of $T$ and we return to the vertex $x$ (here
by abuse of notation, we interpret the label $\alpha^{-1}$ as the label $\alpha$ with the edge pointing in the opposite
direction, and a similar convention holds for $\beta$). The vertex $x\in V(T)$ is called a \emph{fixed vertex} for $(T,w)$; note that there may be more than one such vertex in $T$.

The following Table~\ref{table: w itself only} contains pairwise non-isomorphic $\alpha\beta$-{\it trees} that admit the word $w=[\alpha,\beta^{-1}][\alpha,\beta]$. Here dashed (red) lines are labelled with $\alpha$, solid (blue) lines are labelled with $\beta$, and for simplicity of drawing all loops are omitted; thus, any vertex which is not incident to a red line (or blue line, respectively) is in fact incident to an $\alpha$-loop (or $\beta$-loop, respectively). The fixed vertices are written as red stars, and under each graph $T$ we have written the value for $\delta_T$. 

\begin{table}[H]
\caption{$\alpha\beta$-trees admitting $w=[\alpha,\beta^{-1}][\alpha,\beta] = \alpha^{-1}\beta\alpha\beta^{-1}\alpha^{-1}\beta^{-1}\alpha\beta$.}\label{table: w itself only}
 \begin{tabular}{|c|c|c|c|c|}
\hline 
 \parbox[b]{1.5cm}{\resizebox{\sizeone}{!}{\input{1cycles/tikz/g1.tex}} \centering $(1-\delta)^2$}
 &
 \parbox[b]{\sizetwo}{\resizebox{\sizeone}{!}{\input{1cycles/tikz/g2.tex}} \dd{1}{2}}
 &
 \parbox[b]{\sizetwo}{\resizebox{\sizeone}{!}{\input{1cycles/tikz/g3.tex}} \dd{1}{3}}
 &
 \parbox[b]{\sizetwo}{\resizebox{\sizetwo}{!}{\input{1cycles/tikz/g4.tex}} \dd{3}{2}}
 &
 \parbox[b]{\sizetwo}{\resizebox{\sizetwo}{!}{\input{1cycles/tikz/g5.tex}} \dd{3}{2}}
 \\
 \hline
 \end{tabular}
 \end{table}

Next, we explain how $\alpha\beta$-trees can be used to estimate the number of fixed points in certain permutations. Let $T$ be an $\alpha\beta$-tree admitting the reduced word  $w=w_1w_2 \cdots w_k$, and let $x$ be a fixed vertex of $(T,w)$. Starting at $x$ and tracing $w$ we obtain a sequence $U=(x=x_0,x_1,\ldots, x_k=x)$ of vertices of $T$ such that, by definition, all $v \in V(T)$ occur in $U$. 

Let $a,b \in \Sym(\Omega)$. If $T$ is hosted by 
$(\iota, a, b)$ for some injective map $\iota: V(T) \to \Omega$ then, starting at $x\iota$ and tracing the word $w$ by using $a$ and $b$ for the labels $\alpha$ and $\beta$, respectively, we obtain the sequence $U\iota =(x_0\iota, x_1\iota,\ldots, x_k\iota)$. In particular, $x\iota$ and $w$ uniquely determine the entire map $\iota$ and $x\iota$ is a fixed point of the permutation $w(a,b)$. 

For fixed $a,b$, and $w$ as above, let $T_1,\ldots,T_m$ be pairwise non-isomorphic $\alpha\beta$-trees admitting $w$. For $1 \le j \le m$, we denote the number of fixed vertices in $T_j$ (with respect to $w$) by $\fixed(T_j)$. Also, let $I_j$ be an index set such that for $z \in I_j$, there exists $\iota_z: V(T_j) \to \Omega$ with $T_j$ hosted by $(\iota_z,a,b)$.

\begin{lemma}
\label{number of fixed points}
Let $a,b,w,m,T_j,I_j$ be as in the previous paragraph.
\begin{itemize}
\item[$(i)$] If $x$ is a fixed vertex of $(T_{j_1},w)$, $y$ is a fixed vertex of $(T_{j_2},w)$, and $x\iota_{z_1}=y\iota_{z_2}$ for some $z_1 \in I_{j_1}$ and $z_2 \in I_{j_2}$ then $j_1=j_2$ and $x,y$ are in the same orbit of $\Aut(T_{j_1})$.
\item[$(ii)$] The number of fixed points of $w(a,b)$ is at least
\begin{equation*}
|\fix(w(a,b))| \ge \sum_{j=1}^m \frac{|I_j| \cdot \fixed(T_j)}{|\Aut(T_j)|}.
\end{equation*}
\end{itemize}
\end{lemma}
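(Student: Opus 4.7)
My plan is to derive both parts from the same observation: because $T_j$ admits $w$ from a fixed vertex $x$, tracing $w$ on $T_j$ starting at $x$ produces a sequence $U=(x=x_0,x_1,\dots,x_k=x)$ that visits every vertex, edge, and loop of $T_j$. If in addition $T_j$ is hosted by $(\iota,a,b)$, then the sequence $(x_0\iota,\dots,x_k\iota)$ is exactly the orbit of $x\iota$ under successive applications of the letters of $w$ read as $a,a^{-1},b,b^{-1}$. Hence once the starting point $\omega=x\iota$ is fixed, the image of $U$ is forced in $\Omega$, and, because $\iota$ is injective, $\iota$ itself is determined. This is the uniqueness statement already recorded in the excerpt just before the lemma; I will take it as the driving principle.

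For part (i), I would upgrade this uniqueness principle into an isomorphism. Assume $x\iota_{z_1}=y\iota_{z_2}=\omega$. Let $(x_0,\dots,x_k)$ and $(y_0,\dots,y_k)$ be the traces of $w$ from $x$ in $T_{j_1}$ and from $y$ in $T_{j_2}$, and let $(\omega_0,\dots,\omega_k)$ be the trace of $w(a,b)$ from $\omega$ in $\Omega$. Then $x_i\iota_{z_1}=\omega_i=y_i\iota_{z_2}$ for every $i$, so the assignment $\varphi:x_i\mapsto y_i$ is well-defined by injectivity of $\iota_{z_2}$ (if $x_i=x_{i'}$ then $\omega_i=\omega_{i'}$, hence $y_i=y_{i'}$) and bijective by the symmetric argument. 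Because the two traces visit all edges and loops, the $i$-th step of $w$ witnesses a labelled incidence between $x_i$ and $x_{i+1}$ in $T_{j_1}$ precisely when the same step witnesses the matching labelled incidence between $y_i$ and $y_{i+1}$ in $T_{j_2}$; so $\varphi$ is a label-preserving isomorphism of $\alpha\beta$-graphs. Since the $T_j$'s are pairwise non-isomorphic, $j_1=j_2$, and $\varphi(x)=y$ puts $x,y$ in one $\Aut(T_{j_1})$-orbit.

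For part (ii), I would count triples $(j,z,x)$ with $z\in I_j$ and $x\in\fix(T_j,w)$, of which there are $N=\sum_j |I_j|\fixed(T_j)$, and group them by the resulting fixed point $x\iota_z\in\fix(w(a,b))$. Let $F_j$ be the set of fixed points produced by triples whose first coordinate is $j$; part (i) forces the $F_j$'s to be pairwise disjoint. For $\omega\in F_j$, part (i) also forces every triple mapping to $\omega$ to use the tree $T_j$, with its fixed-vertex component lying in a single $\Aut(T_j)$-orbit $O\subseteq\fix(T_j,w)$; and by the uniqueness of $\iota$ given $x\iota=\omega$ and $w$, each $x\in O$ determines at most one compatible $z\in I_j$. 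Hence the fibre over $\omega$ has size at most $|O|\le|\Aut(T_j)|$, so $|F_j|\ge |I_j|\fixed(T_j)/|\Aut(T_j)|$, and summing over $j$ yields the claimed bound.

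The only subtle point is making sure the isomorphism produced in (i) really respects the full $\alpha\beta$-graph structure (loops, parallel edges, and the direction-flipping effect of the letters $\alpha^{-1},\beta^{-1}$ in $w$). This is handled by the ``admits $w$'' hypothesis, which guarantees that every edge and every loop of each $T_j$ is accounted for by some specific step of $w$ on both sides, so the correspondence $\varphi$ has no ambiguity; the rest of the argument is essentially bookkeeping.
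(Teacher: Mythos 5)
Your argument is correct and follows essentially the same route as the paper: for (i) the map $\varphi:x_i\mapsto y_i$ you build from the two traces is exactly $\iota_{z_1}\iota_{z_2}^{-1}$, which the paper invokes directly as a label-preserving isomorphism, and for (ii) your count of triples $(j,z,x)$ with fibres bounded by $|\Aut(T_j)|$ is the same double-counting the paper performs when it says each element of $F_j$ occurs at most $|\Aut(T_j)|$ times in the tally $|I_j|\cdot\fixed(T_j)$. The extra detail you supply (well-definedness of $\varphi$ via injectivity, and the ``each $x$ in the orbit determines at most one $z$'' step) is a reasonable unpacking of what the paper states more tersely.
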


\begin{proof}
(i) Since $x\iota_{z_1}=y\iota_{z_2}$ and $w$ determines $T_{j_1}\iota_{z_1} = T_{j_2}\iota_{z_2}$, the map $\iota_{z_1}\iota_{z_2}^{-1}$ is a label-preserving isomorphism between $T_{j_1}$ and $T_{j_2}$. Therefore, $j_1=j_2$ and $\iota_{z_1}\iota_{z_2}^{-1} \in \Aut(T_{j_1})$. Moreover, since $x\iota_{z_1}\iota_{z_2}^{-1}=y$, $x$ and $y$ are in the same orbit of $\Aut(T_{j_1})$.

(ii) For $1 \le j \le m$, let $F_j$ be the set of fixed points of $w(a,b)$ of the form $x\iota_z$, for some fixed vertex $x \in V(T_j)$ and $z \in I_j$. By (i), the sets $F_j$ are pairwise disjoint. Moreover, for $z \in I_j$, 
$\iota_z$ contributes $\fixed(T_j)$ fixed points to $F_j$, giving a total count of $|I_j| \cdot \fixed(T_j)$. Part (i) also implies that
any element of $F_j$ occurs in this count at most $\Aut(T_j)$ times.
\end{proof}

\subsection{Walks on graphs}\label{s: walks on graphs} In this subsection we consider graphs $X$ that are symmetric and regular in the following sense. {\em Symmetric} means that for any two vertices $x,y \in V(X)$, the number of edges from $x$ to $y$ is the same as the number of edges from $y$ to $x$. {\em Regular} of valency $d$ means that each $x \in V(X)$ has in-degree and out-degree $d$. For $x \in V(X)$, we denote by $\Delta(x)$ the $d$-element multiset $\{ y \mid (x,y) \in E(X) \}$.

\begin{defn} 
\label{def: lazy}
{\em A \emph{lazy random walk} on $X$ is a discrete stochastic process where a particle moves from vertex to vertex in $X$. If, after $k$ steps, the particle is at $x \in V(X)$ and $\Delta(x)= \{ y_1 , \ldots , y_d \}$ then at step $k+1$ the particle
\begin{itemize}
\item[(i)] stays at $x$ with probability $1/2$;
\item[(ii)] moves to vertex $y_i$ with probability $1/(2d)$, for all $i = 1, \ldots, d$.
\end{itemize}
}
\end{defn}

The asymptotic rate of convergence for the probability
distribution of a particle in a lazy random walk on $X$ is an important and well-studied problem in combinatorics and computer science (see \cite{Lov}). For $x, y  \in V(X)$, we  write $p_k (x, y)$ for the probability that the particle is at vertex $y$ after $k$ steps of a lazy random
walk starting at $x$. For a fixed $\varepsilon > 0$, the {\em mixing time for $\varepsilon$} is the minimum value of $k$ such that
\[ 
\frac{1}{|V(X)|} (1-\varepsilon) \le p_k (x, y) \le \frac{1}{|V(X)|} (1+\varepsilon)    \] 
for all $x, y \in V(X)$. The following estimate is well-known; for a proof, see e.g. \cite[Section 4]{aph}.

\begin{lemma}
\label{l: mixing time}
Let $X$ be a connected, symmetric, and regular directed graph of valency $d$ and with $N$ vertices, and let $\varepsilon >0$. Then the mixing time for $\varepsilon$ is at most $N^2 d \log(N/\varepsilon)$.
\end{lemma}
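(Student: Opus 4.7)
The plan is to reduce the lemma to a spectral gap estimate for the transition matrix of the lazy walk, and then to bound that spectral gap from below by an elementary canonical-paths or Cheeger-type argument.

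First, I would set up notation. Let $A$ be the ``adjacency matrix'' of $X$ in which $A_{xy}$ counts the (multi-)edges from $x$ to $y$. Symmetry of $X$ means $A$ is a symmetric matrix; $d$-regularity means every row sums to $d$. The one-step transition matrix of the lazy walk is $P = \tfrac{1}{2} I + \tfrac{1}{2d} A$, and it is symmetric and doubly stochastic. Thus $P$ has a real orthonormal eigenbasis with eigenvalues $1 = \lambda_1 \ge \lambda_2 \ge \cdots \ge \lambda_N \ge 0$, where the nonnegativity comes from laziness. The uniform distribution $\pi(x) = 1/N$ is stationary, and since $X$ is connected, $\lambda_2 < 1$.

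Second, I would invoke the standard spectral bound on convergence. Expanding $\mathbf{1}_x$ in the orthonormal eigenbasis gives the pointwise estimate
\[
\bigl| p_k(x,y) - 1/N \bigr| \le \lambda_2^{\,k}
\]
for every $x,y \in V(X)$. Using $\lambda_2^k \le e^{-k(1-\lambda_2)}$, it therefore suffices to prove a spectral gap bound of the form $1-\lambda_2 \ge c/(dN^2)$ for some absolute constant $c$, since then
\[
k \ge \frac{1}{c}\, dN^2 \log(N/\varepsilon)
\]
forces $|p_k(x,y) - 1/N| \le \varepsilon/N$ as required.

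Third, and this is the main step, I would lower bound $1-\lambda_2$ via the Diaconis--Stroock canonical paths inequality
\[
1 - \lambda_2 \;\ge\; \Bigl(\max_{e \in E(X)} \frac{1}{Q(e)} \sum_{\gamma_{xy} \ni e} |\gamma_{xy}|\, \pi(x)\pi(y)\Bigr)^{-1},
\]
where $Q(e) = \pi(x)/(2d) = 1/(2dN)$ for the directed edge $e = (x,y)$ of the lazy walk. I would pick $\gamma_{xy}$ to be a shortest directed path from $x$ to $y$, which exists because $X$ is symmetric and connected, and use $|\gamma_{xy}| \le \mathrm{diam}(X) \le N-1$, $\pi(x)\pi(y) = 1/N^2$, and the trivial count that at most $N^2$ paths pass through any given edge. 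These bounds combine to give a gap of order $1/(dN^2)$.

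The main obstacle is squeezing the constant out so as to obtain exactly the stated bound $N^2 d \log(N/\varepsilon)$ rather than a constant multiple thereof; a naive application of the above two inequalities loses a factor of $2$. To sharpen it one can either (i) work directly with $\log(1/\lambda_2)$ rather than $1-\lambda_2$ and carry out the edge-counting more carefully (using that the $N^2$ paths are distributed across $dN$ edges, giving an average of $N/d$ rather than $N^2$ paths per edge), or (ii) simply cite the detailed bookkeeping in \cite[Section~4]{aph}, as is done in the paper. The rest of the argument involves only routine estimates that I would not grind through here.
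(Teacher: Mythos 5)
The paper does not actually prove this lemma; it calls the estimate well-known and points to \cite[Section~4]{aph}, so there is no internal proof to compare against. Your outline is the standard spectral-gap / canonical-paths argument, which is indeed the shape of the argument in the cited source, and your fallback~(ii) of citing \cite{aph} is exactly what the paper does, so at the level of strategy you and the paper agree.

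The one thing worth correcting is that your proposed fix~(i) is not actually a fix. The Diaconis--Stroock/Sinclair canonical-paths estimate is governed by the \emph{maximum} per-edge congestion $\max_e Q(e)^{-1}\sum_{\gamma_{xy}\ni e}|\gamma_{xy}|\,\pi(x)\pi(y)$, not the average; observing that $N^2$ paths spread over $dN$ edges give an average load of $N/d$ per edge buys you nothing unless you also exhibit a routing in which the maximum is comparable to that average (via a multicommodity flow, or by exploiting extra symmetry that the lemma as stated does not assume). Likewise, replacing $1-\lambda_2$ by $\log(1/\lambda_2)$ is a second-order correction that vanishes precisely in the regime $\lambda_2\to 1$ relevant here, since $\log(1/\lambda_2)=(1-\lambda_2)+O\bigl((1-\lambda_2)^2\bigr)$. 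None of this matters to the paper, as Lemma~\ref{lemmaMarkovChain} and Theorem~\ref{main estimate} absorb constant factors harmlessly; but if you wanted a self-contained proof of the bound with the stated constant you would need a genuinely sharper ingredient, not the two tweaks you suggest.
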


For $G=\Sym(\Omega)$ or $\Alt(\Omega)$ and $G=\langle S \rangle$, we are interested in the following symmetric and regular directed graphs $X_k$, for positive integers $k$. Let $V(X_k):=\Omega_{(k)}$, and $E(X_k):=\{ (x,x^g) \mid x \in 
\Omega_{(k)} {\mbox{ and }}g \in S \cup S^{-1} \}$. Clearly $X_k$ has $n_{(k)}$ vertices, is connected, is symmetric, and is regular of valency $|S \cup S^{-1}|$. 

It is useful to induce random walks on the graphs $X_k$ for different $k$ at the same time. This is done as follows. First, we choose a subset $J \subseteq \{ 1,2,\ldots,\ell \}$, where $J$ is the set of steps when the particle moves to a neighbour of the current position as in Definition~\ref{def: lazy}(ii). The length $j:=|J|$ is chosen from the binomial distribution $\textrm{B}(\ell, 1/2)$ and then $J$ itself is chosen from the uniform distribution on the $j$-element subsets of $\{ 1,2,\ldots,\ell \}$. Finally, for $i \in J$, we choose $g_i \in S \cup S^{-1}$ uniformly and use $g_i$ in the $i^{\mathrm{th}}$ step to define the edge on which the particle moves. The overall effect, that is, the trajectory of a lazy random walk with initial position $x \in V(X_k)$, is the same as computing the image of $x$ under the permutation $r=\prod_{i \in J} g_i$; we say that the permutation $r$ {\em is realised by} the lazy random walk. The construction of $r$ uses only the number $\ell$ and $S \cup S^{-1}$, so the permutation $r$ can be considered as realised by lazy random walks in more than one graph $X_k$. Of course, these lazy random walks are not independent. 

Lemma~\ref{l: mixing time} will be useful to us in the following form.

\begin{lemma}\label{lemmaMarkovChain}
Let $S$ be a set of generators of $\Sym(\Omega)$ or $\Alt(\Omega)$ of cardinality at most $n$, and let $k$ be a positive integer. Fix $0<\varepsilon<1$ and set $\ell\geq 2n^{2k+1}\log(n^k/\varepsilon)$. Then, if $r \in \Sym(\Omega)$ is realised by a lazy random walk of length $\ell$ on $X_k$, and $x, y \in \Omega_{(k)}$, then 
\[
(1-\varepsilon)\frac 1{n_{(k)}} \leq \Prob(x^r=y) \leq(1+\varepsilon)\frac 1{n_{(k)}}.
\]
\end{lemma}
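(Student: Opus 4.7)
The plan is to apply Lemma~\ref{l: mixing time} directly to the graph $X_k$ introduced in Subsection~\ref{s: walks on graphs}. The essential observation is that the construction of $r$ described in the text is designed precisely so that $\Prob(x^r = y)$ coincides with $p_\ell(x, y)$, the probability that a lazy random walk of length $\ell$ on $X_k$ starting at $x$ ends at $y$; once this is recognised, the inequalities become a straightforward consequence of the mixing time bound.

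First I would verify that $X_k$ satisfies the hypotheses of Lemma~\ref{l: mixing time}. The text preceding the lemma already notes that $X_k$ is connected, symmetric, and regular; its parameters are $N := n_{(k)}$ vertices and valency $d := |S \cup S^{-1}| \leq 2n$. Next I would check the equivalence of the two descriptions of the walk. Sampling $|J| \sim \mathrm{B}(\ell, 1/2)$ and then $J$ uniformly from the $|J|$-element subsets of $\{1, \ldots, \ell\}$ amounts to including each index independently with probability $1/2$; combined with a uniform choice of $g_i \in S \cup S^{-1}$ for $i \in J$, this means that at each step we stay put with probability $1/2$ and otherwise apply a uniformly random element of $S \cup S^{-1}$ to the current position. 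Since the out-neighbourhood multiset of $x \in \Omega_{(k)}$ in $X_k$ is precisely $\{x^g : g \in S \cup S^{-1}\}$, this matches Definition~\ref{def: lazy} for $X_k$, so the overall distribution of $x^r$ equals $p_\ell(x, \cdot)$.

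With this identification in hand, I would substitute the parameters into Lemma~\ref{l: mixing time} and use $n_{(k)} \leq n^k$ and $d \leq 2n$ to obtain
\[
N^2 d \log(N/\varepsilon) \;\leq\; n_{(k)}^2 \cdot 2n \cdot \log(n^k/\varepsilon) \;\leq\; 2 n^{2k+1} \log(n^k/\varepsilon).
\]
The hypothesis $\ell \geq 2n^{2k+1}\log(n^k/\varepsilon)$ therefore guarantees that $\ell$ is at least the mixing time for $\varepsilon$ on $X_k$, and the conclusion follows immediately from the definition of mixing time.

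The ``hard part'' here is really only bookkeeping: making explicit that the step-by-step construction of $r$ reproduces the lazy random walk described in Definition~\ref{def: lazy}, and that the crude bounds $n_{(k)} \leq n^k$ and $|S \cup S^{-1}| \leq 2n$ (the latter from the size hypothesis on $S$) absorb the parameters of $X_k$ into the stated threshold on $\ell$. No deeper difficulty is anticipated.
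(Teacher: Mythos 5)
Your proposal is correct and takes essentially the same route as the paper: apply Lemma~\ref{l: mixing time} to $X_k$ using the bounds $n_{(k)} \le n^k$ and $|S \cup S^{-1}| \le 2n$. The paper's proof is only a couple of lines; you have merely spelled out the verification that the construction of $r$ realises a lazy random walk in the sense of Definition~\ref{def: lazy}, which the paper takes for granted from the preceding discussion.
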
 

\begin{proof}
Recall that $n_{(k)}=|V(X_k)|$ by definition, and $n_{(k)}\le n^k$. Furthermore $|S \cup S^{-1}| \le 2n$ and the proof follows from Lemma~\ref{l: mixing time}. 
\end{proof}

Note that if $r\in \Sym(\Omega)$ is realised by a lazy random walk of length $\ell$ on $X_k$, then $r\in (S\cup S^{-1}\cup \{1\})^\ell$.

\section{Primary machinery}\label{sec: machine}

The results of this section provide the primary machinery for a proof of Theorem~\ref{main1}. The main step of the proof is that given a set $S$ of generators for $G=\Sym(n)$ or $\Alt(n)$ and $a \in S$ of support size $|\supp(a)|=\delta n$, we would like to construct a permutation as a short word in $S$ with support size less than $\delta n$. The permutations we consider are of the form 
$w(a,a^r)$, for an appropriately chosen reduced word $w$ in the symbols $\{ \alpha, \alpha^{-1}, \beta, \beta^{-1} \}$. In this section, we assume that $w$ is given, and describe how to choose $r \in G$ such that $w(a,a^r)$ has many fixed points. We obtain $r$ as a permutation realised by a lazy random walk. 

Let $w=w_1w_2\cdots w_k$ and let ${\mathcal T}=\{ T_1,\ldots,T_m \}$ be a set of pairwise non-isomorphic $\alpha\beta$-trees admitting $w$. By Lemma~\ref{number of fixed points}(ii), we would like to choose $r$ so that each $T \in {\mathcal T}$ is hosted by $(\iota, a, a^r)$ for many maps $\iota: V(T) \to \Omega$. As $a$ is fixed, it is beneficial first to examine embeddings of $T_\alpha$ and $T_\beta$ separately. 

We prove results for two kinds of permutations $a$. In the ``generic'' case, all non-trivial cycles of $a$ are long, compared to the $\alpha$- and $\beta$-paths occurring in trees $T$ and in the ``special'' case $\supp(a)$ consists of short  cycles of equal length. We fix a small set $\Lambda \subset \Omega$ (in the application in Section~\ref{sec3}, $|\Lambda| \le 10$) and require that $r$ fixes $\Lambda$ setwise and acts on $\Lambda$ on some prescribed way. The purpose of prescribing the action of $r$ on a small set is to ensure that $w(a,a^r)$ is not trivial. (This trick has already been used in \cite{bbs}.) As the points in $\Lambda$ play a special role, we are only interested in injections $\iota: V(T) \to \Omega \setminus \Lambda$. First, we handle the ``generic'' case. 

\begin{lemma} \label{host talpha}
Let $0<\delta_0< 1/2$ and let $\kappa,\lambda,N$ be positive integers. 
Suppose that $a \in \Sym(\Omega)$ has no cycles of length less than $N$ and that $|\supp(a)|=\delta n$, for some $\delta \in (\delta_0,1-\delta_0)$. Let $\gamma \in \{ \alpha, \beta\}$ and let $T$ be an $\alpha\beta$-tree such that $|V(T)| \le \kappa$, $T$ has no $\gamma$-cycles, and every $\gamma$-path in $T$ has at most $N$ vertices. Let $\Lambda \subseteq \Omega$, $|\Lambda| \le \lambda$, and let 
\[
\mathcal{S}_{\gamma} (T):= \{ \iota: V(T) \to \Omega \setminus \Lambda \mid 
{\mbox{ $T_\gamma$ is hosted by $(\iota,a)$}} \} .
\] 
Then 
\[
|\mathcal{S}_{\gamma} (T)| \ge C(\delta_0,\kappa,\lambda,N,n)\ (1-\delta)^{l_\gamma(T)} \ \delta^{p_\gamma(T)} \ n^{l_\gamma(T)+p_\gamma(T)},
\]
where $C(\delta_0,\kappa,\lambda,N,n)$ is a function with $\lim_{n\to \infty}C(\delta_0,\kappa,\lambda,N,n)=1$. 
\end{lemma}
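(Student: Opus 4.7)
The plan is to count injections $\iota : V(T) \to \Omega\setminus\Lambda$ greedily, \emph{one connected component of $T_\gamma$ at a time}. Since $T$ has no $\gamma$-cycles by hypothesis, the components of $T_\gamma$ are exactly the $l_\gamma(T)$ vertices carrying a $\gamma$-loop and the $p_\gamma(T)$ directed $\gamma$-paths, each with at most $N$ vertices. A $\gamma$-loop at a vertex $v$ forces $v\iota \in \fix(a)$; a $\gamma$-path $(v_1,\ldots,v_r)$ with $r\le N$ is entirely determined by the choice of $v_1\iota$, via $v_j\iota = (v_1\iota)^{a^{j-1}}$, and the hypothesis that every non-trivial cycle of $a$ has length at least $N\ge r$ guarantees that these $r$ images are distinct whenever $v_1\iota\in\supp(a)$.

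The main step is to bound, at each stage of this greedy construction, the number of valid choices. Suppose some components have already been processed, so that at most $\kappa$ elements of $\Omega$ are already committed as images. For the next $\gamma$-loop, I must pick an element of $\fix(a)\setminus\Lambda$ avoiding the committed images; this leaves at least $(1-\delta)n-\lambda-\kappa$ options. For the next $\gamma$-path of length $r\le N$, I must pick $v_1\iota\in\supp(a)$ so that none of $v_1\iota,(v_1\iota)^a,\ldots,(v_1\iota)^{a^{r-1}}$ lies in $\Lambda$ or among the committed images; each such forbidden point $y$ rules out at most $r$ starting values (the preimages $y^{a^{-j}}$, $0\le j<r$), so the set of bad starting values has size at most $r(\lambda+\kappa)\le\kappa(\kappa+\lambda)$, leaving at least $\delta n-\kappa(\kappa+\lambda)$ good ones.

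Multiplying these bounds across the $l_\gamma(T)$ loops and $p_\gamma(T)$ paths yields
\[
|\mathcal{S}_\gamma(T)| \;\ge\; \bigl((1-\delta)n - \kappa(\kappa+\lambda)\bigr)^{l_\gamma(T)}\bigl(\delta n - \kappa(\kappa+\lambda)\bigr)^{p_\gamma(T)}.
\]
Factoring $(1-\delta)^{l_\gamma(T)}\delta^{p_\gamma(T)}n^{l_\gamma(T)+p_\gamma(T)}$ out, and using $\delta,1-\delta>\delta_0$ together with $l_\gamma(T)+p_\gamma(T)\le|V(T)|\le\kappa$, the remaining multiplicative factor is at least $\bigl(1-\kappa(\kappa+\lambda)/(\delta_0 n)\bigr)^{\kappa}$, which tends to $1$ as $n\to\infty$. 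Taking $C(\delta_0,\kappa,\lambda,N,n)$ to be any such lower bound completes the argument. I expect the only real obstacle to be cosmetic bookkeeping: ensuring injectivity across different components without double counting or inflating the error term, which the greedy selection handles cleanly since only a lower bound is needed. The hypothesis on $N$ enters the final estimate solely through the implication ``$r\le N\Rightarrow r$ distinct images on any cycle of $a$'', so $C$ is allowed to depend on $N$ only through the fact that the construction is even possible.
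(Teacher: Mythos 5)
Your proof is correct and follows essentially the same greedy, component-by-component argument as the paper: process the $l_\gamma(T)$ loops and $p_\gamma(T)$ paths in turn, using the hypothesis that cycles of $a$ have length at least $N$ to conclude that a path image is determined by (and injective in) its initial vertex's image, and bound the forbidden set of choices at each stage by $\kappa(\kappa+\lambda)$. Both your final lower bound and your explicit $C(\delta_0,\kappa,\lambda,N,n)=\bigl(1-\kappa(\kappa+\lambda)/(\delta_0 n)\bigr)^{\kappa}$ match the paper's up to cosmetic rearrangement.
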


By $\lim_{n\to \infty}C(\delta_0,\kappa,\lambda,N,n)$, we mean that the variables $\delta_0,\kappa,\lambda$ and $N$ are fixed, and $n$ goes to $\infty$. 

\begin{proof}
Let $v_1,\ldots,v_{l_\gamma(T)}$ be the loops of $T_\gamma$ and let $P_1,\ldots,P_{p_\gamma(T)}$ be the directed paths of $T_\gamma$. We embed the components of $T_\gamma$ into $\Omega \setminus \Lambda$ one-by-one, and estimate $|\mathcal{S}_{\gamma} (T)|$ by the product of the number of possible embeddings at each step.

The vertices $v_i$, for $1 \le i \le l_\gamma(T)$, have to be mapped to fixed points of $a$. As $a$ has at least $n-\delta n - \lambda$ fixed points outside $\Lambda$, $v_1\iota,\ldots, v_{l_\gamma(T)}\iota$ can be chosen in at least $(n-\delta n -\lambda)(n-\delta n-\lambda -1) \cdots (n-\delta n - \lambda - l_\gamma(T)+1) \ge
(n-\delta n - \lambda - \kappa)^{l_\gamma(T)}$ distinct ways. 

Next, we consider the directed paths in $T_\gamma$. For $1 \le i \le p_\gamma(T)$, we write $P_i=(w_{i,0},\ldots,w_{i,c_i})$. Now, once the image of $w_{i,0}$ under $\iota$ is chosen,  in order to guarantee that $\iota$ hosts the path $P_i$, we require that $w_{i,j}\iota=(w_{i,0}\iota)^{a^{j}}$ for each $j=0,\ldots,c_i$. Hence, for each $i\in \{1,\ldots,p_\gamma(T)\}$, the image of $P_i$ under $\iota$ is uniquely determined by $w_{i,0}\iota$. Let $\Delta_i$ be the union of the sets $P_z\iota$, for $z <i$. 
Since by hypothesis $a$ has no cycles of length $N-1$ or shorter and since $T$ has no $\gamma$-path of length greater than $N$, the only requirement for choosing the image of $w_{i,0}$ under $\iota$ is that 
$$w_{i,0}\iota \not\in \bigcup_{j=0}^{c_i}(\Delta_i \cup \Lambda)^{a^{-j}}$$
(since we have to avoid $\Lambda$ and the $\iota$-images of previously mapped $P_z$). A gross overestimate for the size of this union is $(c_i+1)(\lambda+|V(T)|) \le \kappa \lambda + \kappa^2$, and so $w_{i,0}\iota$ can be chosen in at least
$\delta n - \kappa \lambda -\kappa^2$ ways. Summarizing, we obtain 
$$|\mathcal{S}_{\gamma} (T)| \ge (n-\delta n - \lambda - \kappa)^{l_\gamma(T)} \ 
(\delta n - \kappa\lambda - \kappa^2)^{p_\gamma(T)}.$$
By factoring out $\delta$, $(1-\delta)$, and $n$, we get 
\begin{equation*}
|\mathcal{S}_\gamma (T)| \ge C(\delta_0,\kappa,\lambda,N,n) \ (1-\delta)^{l_\gamma(T)}\ \delta^{p_\gamma(T)} \ n^{l_\gamma(T)+p_\gamma(T)},
\end{equation*}
where 
$$C(\delta_0,\kappa,\lambda,N,n)=\left( 1 - \frac{\kappa+\lambda}{\delta_0 n} \right) ^\kappa \left( 1-\frac{\kappa\lambda+\kappa^2}{\delta_0 n} \right) ^\kappa. $$
Clearly, $\lim_{n\to \infty}C(\delta_0,\kappa,\lambda,N,n)=1$. 
\end{proof}

The case of ``special'' permutations $a$ is very similar. 

\begin{lemma}\label{host talpha2}
Let $0<\delta_0< 1/2$ and let $\kappa,\lambda,N$ be positive integers. 
Suppose that  every cycle of $a \in \Sym(\Omega)$ has length $1$ or $N$ and $|\supp(a)|=\delta n$, for some $\delta \in (\delta_0,1-\delta_0)$. Let $\gamma \in \{ \alpha, \beta\}$ and let $T$ be an $\alpha\beta$-tree such that $|V(T)| \le \kappa$, every $\gamma$-cycle in $T$ has $N$ vertices, and every $\gamma$-path in $T$ has at most $N$ vertices. Let $\Lambda \subseteq \Omega$, $|\Lambda| \le \lambda$, and let 
\[
\mathcal{S}_{\gamma} (T):= \{ \iota: V(T) \to \Omega \setminus \Lambda \mid 
{\mbox{ $T_\gamma$ is hosted by $(\iota,a)$}} \} .
\] 
Then 
\[
|\mathcal{S}_{\gamma} (T)| \ge C(\delta_0,\kappa,\lambda,N,n)\ (1-\delta)^{l_\gamma(T)} \ \delta^{p_\gamma(T)} \ n^{l_\gamma(T)+p_\gamma(T)},
\]
where $C(\delta_0,\kappa,\lambda,N,n)$ is a function with $\lim_{n\to \infty}C(\delta_0,\kappa,\lambda,N,n)=1$. 
\end{lemma}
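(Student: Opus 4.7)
The plan is to mirror the proof of Lemma~\ref{host talpha} essentially verbatim, with the one new feature being that $T_\gamma$ may now contain $\gamma$-cycles (of length exactly $N$). These will be handled by the same trick used for $\gamma$-paths: designate a distinguished vertex on each $\gamma$-cycle, pick its image arbitrarily (subject to avoiding obstructions), and observe that the images of the remaining vertices are then forced by $a$. The key structural coincidence that makes this work is that the length $N$ of every $\gamma$-cycle matches exactly the length of every nontrivial cycle of $a$, so that a $\gamma$-cycle of $T$ embeds into an $N$-cycle of $a$ in a way that wraps around consistently.

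In detail, I would list the components of $T_\gamma$ as $v_1,\ldots,v_{l_\gamma(T)}$ (the $\gamma$-loops), followed by $P_1,\ldots,P_s$ (the $\gamma$-paths) and $C_1,\ldots,C_t$ (the $\gamma$-cycles), with $s+t=p_\gamma(T)$. I embed them one at a time, in that order. The $\gamma$-loops must map to fixed points of $a$ outside $\Lambda$, yielding at least $(n-\delta n -\lambda-\kappa)^{l_\gamma(T)}$ injective choices, exactly as in Lemma~\ref{host talpha}. For each $\gamma$-path $P_i=(w_{i,0},\ldots,w_{i,c_i})$ with $c_i+1\leq N$, picking $w_{i,0}\iota$ determines all other vertex images via powers of $a$; distinctness of these images on $P_i$ is guaranteed since $a$ acts as an $N$-cycle on the relevant orbit and $c_i+1\leq N$. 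For each $\gamma$-cycle $C_i$, I single out one vertex $c_{i,1}$; picking $c_{i,1}\iota\in\supp(a)$ forces the other $N-1$ images, and since the $\gamma$-cycle has length $N$ equal to the $a$-orbit length, the image set is exactly the full $N$-cycle of $a$ containing $c_{i,1}\iota$ and the cycle closes up automatically.

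For the forbidden-set bookkeeping at each path/cycle step, if $\Delta_i$ denotes the points already used, I need $w_{i,0}\iota$ (respectively $c_{i,1}\iota$) to avoid $\bigcup_{j=0}^{N-1}(\Delta_i\cup\Lambda)^{a^{-j}}$, whose cardinality is at most $N(\lambda+|V(T)|)\leq N(\lambda+\kappa)$. This gives at least $\delta n - N(\lambda+\kappa)$ valid choices at each such step. Multiplying the per-component estimates yields
\[
|\mathcal{S}_\gamma(T)| \;\geq\; (n-\delta n - \lambda-\kappa)^{l_\gamma(T)}\,(\delta n - N(\lambda+\kappa))^{p_\gamma(T)},
\]
and factoring out $(1-\delta)$, $\delta$, and $n$ produces the advertised bound with
\[
C(\delta_0,\kappa,\lambda,N,n) = \Bigl(1-\tfrac{\kappa+\lambda}{\delta_0 n}\Bigr)^{\kappa}\Bigl(1-\tfrac{N(\kappa+\lambda)}{\delta_0 n}\Bigr)^{\kappa},
\]
which visibly tends to $1$ as $n\to\infty$ with $\delta_0,\kappa,\lambda,N$ fixed.

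There is no serious obstacle; the only point that requires a moment's care is verifying that a $\gamma$-cycle of length $N$ really does embed into an $N$-cycle of $a$ consistently (so that the closing edge $c_{i,N}\to c_{i,1}$ is automatically satisfied), which is immediate from $N$ matching on both sides. I would note along the way that the counting treats $\gamma$-paths and $\gamma$-cycles uniformly because $p_\gamma(T)$ lumps them together by definition, so the exponent in the final bound is precisely $p_\gamma(T)$ rather than requiring a separate split into paths and cycles.
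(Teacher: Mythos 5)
Your proposal is correct and takes essentially the same route as the paper: both reduce to the proof of Lemma~\ref{host talpha}, designate a distinguished starting vertex on each $\gamma$-path and each $\gamma$-cycle, and use the fact that every $\gamma$-cycle has exactly $N$ vertices while every nontrivial cycle of $a$ also has length $N$, so the closing edge of a $\gamma$-cycle is satisfied automatically. The only cosmetic difference is in the forbidden-set bookkeeping: you bound the size of the forbidden union by $N(\lambda+\kappa)$ (using $c_i+1\le N$), whereas the paper reuses the bound $\kappa\lambda+\kappa^2$ from Lemma~\ref{host talpha} (using $c_i+1\le\kappa$); both are gross overestimates that vanish relative to $\delta_0 n$, so either choice of $C(\delta_0,\kappa,\lambda,N,n)$ tends to $1$.
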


\begin{proof}
We may follow almost verbatim the proof of Lemma~\ref{host talpha}. The only difference is that the list $P_1,\ldots,P_{p_\gamma(T)}$ may also contain $\gamma$-cycles, so we have to change slightly the definition of the vertices $w_{i,0}$. If $P_i$ is a $\gamma$-path then $w_{i,0}$ is the starting vertex of the path as before, while if $P_i$ is a $\gamma$-cycle then $w_{i,0}$ can be chosen as an arbitrary vertex of $P_i$. Since the $\gamma$-cycles of $T_\gamma$ have the same length as the cycles in $\supp(a)$, the rest of the proof goes through without any modification.
\end{proof}

Now we are ready to prove the main result of this section. Let $0<\delta_0<1/2$ and $\kappa,\lambda,N \in \Z^+$ be fixed, and let $w=w_1\cdots w_k$ be a reduced word in $\{ \alpha, \alpha^{-1}, \beta, \beta^{-1} \}$. Suppose further that ${\mathcal T}=\{ T_1,\ldots,T_m \}$ is a set of $\alpha\beta$-trees admitting $w$ and $|V(T)|\le \kappa$ for all $T \in {\mathcal T}$. Let $G=\Sym(n)$ or $\Alt(n)$ be generated by a set $S$ of cardinality at most $n$. We do not have to distinguish the two cases (``generic'' and ``special'') for $a$ anymore, so let $a \in \Sym(\Omega)$ with $|\supp(a)|=\delta n$ for some $\delta \in (\delta_0,1-\delta_0)$ and suppose that either 
\begin{itemize}
\item all non-trivial cycles in $a$ have length at least $N$, none of the 
$\alpha\beta$-trees $T \in {\mathcal T}$ have any cycles, and every $\alpha$- and $\beta$-path in $T$ has at most $N$ vertices; or
\item every cycle of $a$ has length $1$ or $N$, for all $T\in {\mathcal T}$ every $\alpha$- and $\beta$-cycle has $N$ vertices, and every $\alpha$- and $\beta$-path has at most $N$ vertices.
\end{itemize}
Let $\Lambda \subseteq \Omega$, $|\Lambda| \le \lambda$, let $g \in \Sym(\Lambda)$, and let $S_\gamma(T)$ be as in Lemmas~\ref{host talpha} and \ref{host talpha2}. Finally, let an error bound $\varepsilon >0$ be given. We ``collect'' errors in estimates from different sources, so we choose $\varepsilon' < \varepsilon$ such that 
$$\frac{(1-\varepsilon')^3}{1+\varepsilon'}=1-\varepsilon .$$
Moreover, we may assume that $n$ is larger than a bound $n_0(\delta_0,\kappa,\lambda,N,\varepsilon)$ depending only on $\delta_0,\kappa,\lambda,N$, and $\varepsilon$ such that:
\begin{itemize}
\item[(i)] For all $\gamma \in \{ \alpha,\beta \}$ and for all $T \in {\mathcal T}$, 
\begin{equation}
\label{eq:11}
|{\mathcal S}_\gamma(T)|>\left( 1- \varepsilon' \right)  
\ (1-\delta)^{l_\gamma(T)} \ \delta^{p_\gamma(T)} \ n^{l_\gamma(T)+p_\gamma(T)}.
\end{equation}
(Note that this inequality is satisfied by large enough $n$, by Lemmas~\ref{host talpha} and \ref{host talpha2}.)
\item[(ii)] $n^{2(\kappa+\lambda+1)} > 2n^{2(\kappa+\lambda)+1}\log(n^{\kappa+\lambda}/\varepsilon')$.
\end{itemize}
Recall that $\delta_T=(1-\delta)^{l_\alpha(T)+l_\beta(T)}\delta^{p_\alpha(T)+p_\beta(T)}$ and
that $\fixed(T)$ denotes the number of fixed vertices of $(T,w)$. 

\begin{thm}
\label{main estimate}
With the notation of the previous paragraph, there exists $r \in \Sym(\Omega)$ realised by a lazy random walk of length $n^{2(\kappa+\lambda+1)}$ such that $r|_\Lambda = g$ and 
\[
|\fix(w(a,a^r))| > (1-\varepsilon) \ n \ \sum_{j=1}^m \frac{\delta_{T_j} \cdot \fixed(T_j)}{|\Aut(T_j)|}.
\]
\end{thm}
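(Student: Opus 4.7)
The plan is to choose $r$ randomly via a lazy random walk of length $\ell = n^{2(\kappa+\lambda+1)}$ on $X_{\kappa+\lambda}$, condition on the event $\{r|_\Lambda = g\}$, bound the conditional expectation of $|\fix(w(a,a^r))|$ from below by the quantity in the statement, and extract a realization of $r$ attaining the conditional expectation. Condition~(ii) on $n$ is precisely what is needed to invoke Lemma~\ref{lemmaMarkovChain} with $k=\kappa+\lambda$ (hence with any $k' \le \kappa+\lambda$), so the random $r$ mixes on tuples of length at most $\kappa+\lambda$ within a factor of $1\pm\varepsilon'$.

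The first step is a translation. For each $j$ let $I_j(r) = \{\iota: V(T_j) \to \Omega \setminus \Lambda \text{ injective}: T_j \text{ is hosted by }(\iota,a,a^r)\}$. Using the identity $\omega^{a^r} = (((\omega)^{r^{-1}})^a)^r$, the condition that $T_\beta$ is hosted by $(\iota, a^r)$ is equivalent to $T_\beta$ being hosted by $(\iota r^{-1}, a)$; since $r|_\Lambda = g$ stabilises $\Omega \setminus \Lambda$ setwise, the map $\iota r^{-1}$ still has image in $\Omega \setminus \Lambda$. Hence $|I_j(r)|$ equals the number of pairs $(\iota_1, \iota_2) \in \mathcal{S}_\alpha(T_j) \times \mathcal{S}_\beta(T_j)$ satisfying $\iota_1 = \iota_2 r$ pointwise, and Lemma~\ref{number of fixed points}(ii) reduces the problem to proving $\expec[|I_j(r)| \mid r|_\Lambda = g] \ge (1-\varepsilon)\delta_{T_j} n$ for each $j$.

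The second step is a pair-by-pair computation. Writing $\lambda' = |\Lambda|$ and fixing any pair $(\iota_1, \iota_2) \in \mathcal{S}_\alpha(T_j) \times \mathcal{S}_\beta(T_j)$, the joint event $\{\iota_1 = \iota_2 r \text{ and } r|_\Lambda = g\}$ is precisely the event that $r$ sends one specific tuple of length $|V(T_j)|+\lambda' \le \kappa+\lambda$ to another, so Lemma~\ref{lemmaMarkovChain} gives a lower bound $(1-\varepsilon')/n_{(|V(T_j)|+\lambda')}$ for its probability and an upper bound $(1+\varepsilon')/n_{(\lambda')}$ for $\Prob(r|_\Lambda = g)$. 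Dividing these, summing over pairs, and applying Lemma~\ref{host talpha} (or Lemma~\ref{host talpha2} in the ``special'' case) via \eqref{eq:11} to bound $|\mathcal{S}_\alpha(T_j)| \cdot |\mathcal{S}_\beta(T_j)|$ from below by $(1-\varepsilon')^2 \delta_{T_j} n^{l_\alpha(T_j)+l_\beta(T_j)+p_\alpha(T_j)+p_\beta(T_j)}$, and finally invoking Lemma~\ref{tree} to identify this exponent with $|V(T_j)|+1$, yields
\[
\expec[|I_j(r)| \mid r|_\Lambda = g] \ge \frac{(1-\varepsilon')^3}{1+\varepsilon'} \cdot \delta_{T_j} \cdot \frac{n_{(\lambda')}\,n^{|V(T_j)|+1}}{n_{(|V(T_j)|+\lambda')}}.
\]
The ratio on the right is at least $n(1-O_{\kappa,\lambda}(1/n))$ uniformly in $j$, and $(1-\varepsilon')^3/(1+\varepsilon') = 1-\varepsilon$ by the choice of $\varepsilon'$, so once $n$ is sufficiently large (absorbed into $n_0$) the right-hand side is at least $(1-\varepsilon)\delta_{T_j} n$; summing over $j$ and selecting any realisation of $r$ attaining the conditional expectation finishes the proof.

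The main technical obstacle is not conceptual but rather the careful marshalling of error factors. Three $(1-\varepsilon')$ factors (two from the pair of $|\mathcal{S}_\gamma(T_j)|$ bounds and one from the mixing lower bound), together with one $(1+\varepsilon')$ factor from the upper bound used for the conditioning event, and a polynomial correction between $n_{(k')}$ and $n^{k'}$, must compose cleanly and uniformly in $j$ to produce the single $(1-\varepsilon)$ factor stated; no new input beyond Lemmas~\ref{tree}, \ref{number of fixed points}, \ref{host talpha}, \ref{host talpha2}, and \ref{lemmaMarkovChain} is required.
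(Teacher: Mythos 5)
Your proof is correct and follows essentially the same route as the paper's: you reduce to the conditional expectation of $|\fix(w(a,a^r))|$, decompose each summand over pairs $(\iota_1,\iota_2)\in\mathcal{S}_\alpha(T_j)\times\mathcal{S}_\beta(T_j)$ with $\iota_1=\iota_2 r$, apply Lemma~\ref{lemmaMarkovChain} for the conditional probability of each such event, and then combine Lemmas~\ref{host talpha}/\ref{host talpha2}, \ref{number of fixed points}, and \ref{tree} exactly as the paper does. The only difference is a cosmetic reorganization (summing directly over pairs rather than nesting the sum over $\iota\in\mathcal{S}_\alpha$ with an inner sum over $\iota_z\in\mathcal{S}_\beta$), and your closing appeal to enlarging $n_0$ is unnecessary since $n^{|V(T_j)|+1}n_{(\lambda')}/n_{(|V(T_j)|+\lambda')}>n$ holds outright.
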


By $r|_\Lambda$ we mean the restriction of the permutation $r$ (considered as a function $r: \Omega \to \Omega$) to $\Lambda$.

\begin{proof}
Let $r$ be realised by a lazy random walk of length $\ell:=n^{2(\kappa+\lambda+1)}$. Our main goal is to give an estimate for the conditional expectation 
$\expec(|\fix(w(a,a^r))| \mid r|_\Lambda = g)$.

Let $T \in {\mathcal T}$ and $\iota \in \mathcal{S}_\alpha(T)$ be arbitrary but fixed. First, we give an estimate for the probability that $T$ is hosted by $(\iota, a, a^r)$. 
By Lemma~\ref{lemmaMarkovChain}, for any $x,y \in (\Omega \setminus \Lambda)_{(|V(T)|)}$, 
$$\prob(x^r=y \wedge r|_\Lambda=g) \ge (1-\varepsilon')\frac{1}{n_{(|\Lambda|+|V(T)|)}} {\mbox{ and }}(1+\varepsilon')\frac{1}{n_{(|\Lambda|)}} \ge \prob(r|_\Lambda=g). $$
Hence, for the conditional probability $\prob(x^r=y \mid  r|_\Lambda=g)$, we have 
\begin{equation}
\label{eq:cond}
\prob(x^r=y \mid  r|_\Lambda=g) \ge \frac{1-\varepsilon'}{1+\varepsilon'}
\frac{1}{(n-|\Lambda|)_{(|V(T)|)}} > \frac{1-\varepsilon'}{1+\varepsilon'}
\frac{1}{n^{|V(T)|}}.
\end{equation}

Since by definition $T_\alpha$ is hosted by $(\iota,a)$, $T$ is hosted by
$(\iota, a, a^r)$ if and only if $T_\beta$ is hosted by $(\iota, a^r)$; this
is equivalent to
\begin{equation*}
\begin{aligned}
& (x\iota)^{a^r}=y\iota \textrm{ for all } (x,y)\in E(T_\beta) \\
\Longleftrightarrow& (x\iota)^{r^{-1}ar} = y\iota  \textrm{ for all }
(x,y)\in E(T_\beta) \\
\Longleftrightarrow& (x\iota)^{r^{-1}a} = (y\iota)^{r^{-1}}  \textrm{ for all
} (x,y)\in E(T_\beta) \\
\Longleftrightarrow& \textrm{ the function } V(T)\to \Omega, x\mapsto (x\iota)^{r^{-1}} \textrm{ is in } \mathcal{S}_\beta(T) \\
\Longleftrightarrow& (V(T)\iota_z)^r=V(T)\iota \textrm{ for some }
\iota_z \in  \mathcal{S}_\beta(T).
\end{aligned}
\end{equation*}
Lemma \ref{number of fixed points} implies that for different $\iota_{z_1},\iota_{z_2} \in 
\mathcal{S}_\beta(T)$, the events $(V(T)\iota_{z_i})^r=V(T)\iota$ are disjoint. Therefore, also using \eqref{eq:11} and  \eqref{eq:cond},
\begin{eqnarray}
\prob(T\mbox{ is hosted by }(\iota,a,a^r) \mid  r|_\Lambda=g) =\sum_{\iota_z \in \mathcal{S}_\beta(T)} \prob\left( (V(T)\iota_{z})^r=V(T)\iota \mid r|_\Lambda=g \right) \notag \\
\label{eq:22} > \frac{(1-\varepsilon')^2}{1+\varepsilon'}
\frac{(1-\delta)^{l_\beta(T)} \ \delta^{p_\beta(T)} \ n^{l_\beta(T)+p_\beta(T)}}{n^{|V(T)|}}.
\end{eqnarray}
Next, by Lemma~\ref{number of fixed points}(ii) and \eqref{eq:11}, \eqref{eq:22},
\begin{eqnarray*}
\expec(|\fix(w(a,a^r))| \ourmid r|_\Lambda=g) \geq \sum_{j=1}^m 
\sum_{\iota \in \mathcal{S}_\alpha(T_j)} \prob(T_j\mbox{ is hosted by }(\iota,a,a^r) \ourmid  r|_\Lambda=g) \frac{\fixed(T_j)}{|\Aut(T_j)|} \\
> \sum_{j=1}^m \frac{(1-\varepsilon')^3}{1+\varepsilon'} 
\frac{(1-\delta)^{l_\alpha(T_j)+l_\beta(T_j)} \ \delta^{p_\alpha(T_j)+p_\beta(T_j)} \ n^{l_\alpha(T_j)+l_\beta(T_j)+p_\alpha(T_j)+p_\beta(T_j)}}{n^{|V(T_j)|}}
\frac{\fixed(T_j)}{|\Aut(T_j)|}.
\end{eqnarray*}
Finally, by Lemma~\ref{tree}, 
$ l_\alpha(T_j)+l_\beta(T_j)+p_\alpha(T_j)+p_\beta(T_j) = |V(T_j)|+1$, yielding
\[
\expec(|\fix(w(a,a^r))| \ourmid r|_\Lambda=g) >  
(1-\varepsilon) \ n \ \sum_{j=1}^m \frac{\delta_{T_j} \cdot \fixed(T_j)}{|\Aut(T_j)|}.
\]
To finish the proof of the theorem, we simply take $r$ that gives at least the expected number of fixed points. 
\end{proof}

\section{Proof of Theorem~\ref{main1}}\label{sec3}

In this section we apply Theorem~\ref{main estimate} to prove Theorem~\ref{main1}. We start with two technical lemmas.

\begin{lemma}\label{silly} 
Let $m$ be an integer and take $g,h\in \Sym(m)$. In each of the following cases, $[h,(h^g)^{-1}][h,h^g]$ contains a $7$-cycle:
\begin{enumerate}
\item\label{i} $m\geq 7$, $h=(1,2,3,\dots, m)$, $g$ contains the cycle $(1,3,m)$ and fixes the points $2,4,5,6,m-3,m-2,m-1$;
\item\label{ii} $m=7$, $h=(1,2,3,4,5)(6)(7)$, $g=(1,6)(3,7)(2)(4)(5)$;
\item\label{iii} $m=7$, $h=(1,2,3)(4,5,6)(7)$, $g=(1,7,2,4)(3)(5)(6)$;
\item\label{iv} $m=7$, $h=(1,2)(3,4)(5,6)(7)$, $g=(1,5,7,2,3)(4)(6)$.
\end{enumerate}
\end{lemma}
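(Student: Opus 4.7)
The key observation is that \([h,(h^g)^{-1}][h,h^g]\) is precisely \(w(h,h^g)\), where \(w(\alpha,\beta)=\alpha^{-1}\beta\alpha\beta^{-1}\alpha^{-1}\beta^{-1}\alpha\beta\) is the central eight-letter word of this paper. Each of the four cases thus reduces to exhibiting a 7-cycle in the cycle decomposition of one explicit permutation, and the natural method is to trace seven carefully chosen points through the eight letters of \(w\), applying \(h\), \(h^{-1}\), \(h^g\), or \((h^g)^{-1}\) at each step.

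For case (i), I would first observe that, since \(h=(1,2,\ldots,m)\), the conjugate \(h^g\) is the cycle \((1^g,2^g,\ldots,m^g)\); the hypothesis that \(g\) contains \((1,3,m)\) and fixes \(\{2,4,5,6,m-3,m-2,m-1\}\) determines \(h^g\) and \((h^g)^{-1}\) completely on the set \(\Sigma:=\{1,2,3,4,5,6,m-3,m-2,m-1,m\}\), irrespective of the unspecified behaviour of \(g\) elsewhere. I would then claim that the seven points \(\{1,2,3,4,5,m-1,m\}\) form a single 7-cycle of \(w(h,h^g)\), specifically the cycle \((1,m,5,3,m-1,4,2)\), and verify this by tracing each of the seven starting points through all eight letters of \(w\), checking that every intermediate image stays inside \(\Sigma\) so that no unspecified value of \(g\) is ever invoked.

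Cases (ii), (iii), (iv) have \(m=7\) fixed and both \(g\) and \(h\) fully specified, so each is a finite mechanical verification: compute \(h^g\) by applying \(g\) to each entry of the cycle notation of \(h\), then iteratively apply \(w(h,h^g)\) to a single point until the orbit closes, observing that the orbit length is \(7\).

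The main obstacle is the bookkeeping of case (i), where the argument must be made uniform in \(m\geq 7\). Specifically, one must verify that the claimed 7-cycle \((1,m,5,3,m-1,4,2)\) consists of seven distinct points for every \(m\geq 7\) (which is precisely what the hypothesis \(m\geq 7\) guarantees), and one must check that the eight-step trace of each of the seven points stays inside \(\Sigma\) even when \(\Sigma\) has fewer than ten elements (for \(m\in\{7,8,9,10\}\) the two ``halves'' \(\{1,\ldots,6\}\) and \(\{m-3,\ldots,m\}\) overlap and some identifications occur, but the choice of cycle is engineered to survive them). Once these are verified, the calculation itself presents no conceptual difficulty.
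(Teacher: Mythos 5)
Your proposal is correct and takes essentially the same route as the paper: the paper simply asserts that $(1,m,5,3,m-1,4,2)$ is a $7$-cycle of $[h,(h^g)^{-1}][h,h^g]$ and says parts (2)--(4) ``follow easily,'' which is exactly the computation you describe. Your additional remark about the small cases $m\in\{7,8,9,10\}$ is a reasonable thing to flag, and indeed the hypotheses remain consistent when the two blocks $\{1,\dots,6\}$ and $\{m-3,\dots,m\}$ overlap (e.g.\ for $m=7$ the condition ``$g$ fixes $m-3,m-2,m-1$'' duplicates ``$g$ fixes $4,5,6$''), and the trace of the eight letters never leaves the set $\Sigma=\{1,\dots,6,m-3,\dots,m\}$ nor invokes the undetermined values $h^g(6)$ or $(h^g)^{-1}(m-3)$, so the $7$-cycle is well defined uniformly in $m\geq 7$.
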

\begin{proof}
To show $(1)$, we note simply that $(1,m,5,3,m-1,4,2)$ is a $7$-cycle of $[h,(h^g)^{-1}][h,h^g]$. Parts~$(2),(3)$ and $(4)$ follow easily.
\end{proof}

Note that in each case in Lemma~\ref{silly} we prescribe the action of $g$ on at most 10 points. 

For $0<\delta <1$, define 
\begin{equation}\label{bigfeller}
\begin{aligned}
f(\delta):=
&(1 - \delta)^2 + (1 - \delta)^2 \delta + (1 - \delta)^3 \delta + 4 (1 - \delta)^4 \delta^2 + 2 (1 - \delta)^2 \delta^3 \\ &+
 3 (1 - \delta)^5 \delta^3 + 10 (1 - \delta)^9 \delta^4 + 26 (1 - \delta)^7 \delta^5 + 20 (1 - \delta)^8 \delta^5 \\ &+
 6 (1 - \delta)^5 \delta^6 + 16 (1 - \delta)^6 \delta^6 + 40 (1 - \delta)^8 \delta^6 + 3 (1 - \delta)^4 \delta^7 \\ &+
 8 (1 - \delta)^6 \delta^7 + 20 (1 - \delta)^7 \delta^7 + 10 (1 - \delta)^8 \delta^9 + 20 (1 - \delta)^7 \delta^{10} \\ &+
 10 (1 - \delta)^6 \delta^{11} + 15 (1 - \delta)^7 \delta^{11}.
\end{aligned}
\end{equation}

\begin{lemma}\label{festimates}\
\begin{enumerate}
\item The function $\delta \mapsto 1-0.999f(\delta)$ is monotone increasing on the interval $(0,1)$.
\item The equation $0.999 f(\delta)=1-\delta$ has a unique solution in $(0,1)$. Up to six significant digits, the solution is $\delta=0.632599$.
\item\label{three} Starting with $\delta =0.63$, nine iterations of the function $\delta \mapsto 1-0.999f(\delta)$ reach a value less than $0.326$. 
\end{enumerate}
\end{lemma}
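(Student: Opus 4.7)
The plan is to treat all three parts as elementary finite verifications for the explicit degree-$18$ polynomial $f(\delta)$ defined by~\eqref{bigfeller}. Once one expands $f$ in the monomial basis, each assertion becomes a routine algebraic check with rational coefficients (after clearing the factor $0.999$), which in principle is certifiable via interval arithmetic. Throughout, I denote $F(\delta):=1-0.999\,f(\delta)$ and $h(\delta):=1-\delta-0.999\,f(\delta)$, so that fixed points of $F$ in $(0,1)$ correspond to zeros of $h$ in $(0,1)$.

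For part~(1), monotonicity of $F$ on $(0,1)$ is equivalent to $f'(\delta)\le 0$ there. I would differentiate $f$ term-by-term, collect the result in the monomial basis, and apply Sturm's theorem on $(0,1)$ to show that $f'$ has no sign change on the open interval; a single numerical evaluation, e.g.\ at $\delta=1/2$, then pins the sign to be negative. Alternatively, after multiplying $f'$ by a suitable power of $\delta(1-\delta)$ to clear any negative exponents coming from differentiation, one may hope to exhibit a decomposition into manifestly nonnegative summands on $(0,1)$.

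For part~(2), part~(1) yields $F'(\delta)=-0.999\,f'(\delta)\ge 0$, but this does not directly force $h=F(\delta)-\delta$ to be monotone, so I would instead apply Sturm's theorem to $h$ on $(0,1)$ to count its real zeros exactly; uniqueness then reduces to a direct arithmetic check on the Sturm sequence. The claimed numerical value $\delta=0.632599$ can be certified to six significant digits by running Newton's method under interval arithmetic from a good initial guess, which isolates the unique root in an interval of width at most $10^{-6}$.

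For part~(3), I would simply tabulate the iterates of $\delta\mapsto F(\delta)$ starting from $\delta_0=0.63$, using part~(1) to ensure that small rounding errors do not affect the sign of $\delta_9-0.326$, and read off that $\delta_9<0.326$. The main obstacle throughout is really part~(1): monotonicity of a sum of $19$ Bernstein-type summands $c_i(1-\delta)^{a_i}\delta^{b_i}$ peaking at distinct values $b_i/(a_i+b_i)$ is not manifest from the form of $f$, but after expansion in the monomial basis it reduces to a straightforward Sturm check on $f'$.
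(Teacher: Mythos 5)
The paper's own proof of this lemma is a single sentence deferring to Sage, so your proposal is the same basic approach (computer verification of explicit polynomial facts) but with a concrete, in-principle-certifiable algorithm (Sturm sequences plus interval arithmetic) spelled out; that is a reasonable way to make the paper's appeal to Sage rigorous. One minor slip: $f$ is already a polynomial, so differentiation produces no negative exponents and there is nothing to clear by multiplying $f'$ by powers of $\delta(1-\delta)$.

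There is, however, a more substantive issue your Sturm plan would surface if actually carried out. Part~(2) of the lemma, as stated, is false: $h(\delta):=1-\delta-0.999\,f(\delta)$ has \emph{two} zeros in $(0,1)$, not one. Since $f(0)=1$, we have $h(0)=0.001>0$, and since the $\delta^1$ and $\delta^2$ coefficients of $f$ vanish, $f'(0)=0$ and hence $h'(0)=-1$; thus $h(\delta)\approx 0.001-\delta$ near the origin and $h$ changes sign near $\delta\approx 0.001$. Together with the sign change at $\delta\approx 0.632599$ and $h>0$ near $\delta=1$, this gives two sign changes in $(0,1)$. This spurious small root is created entirely by the factor $0.999<1$ (with $1$ in place of $0.999$ one instead gets $h(0)=0$). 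It is harmless for the paper's argument, which uses only parts~(1) and~(3), but a Sturm count over $(0,1)$ would return $2$, and you would then be unable to certify "unique solution in $(0,1)$" as stated; the claim should be read as uniqueness on some interval bounded away from $0$ (say $[0.01,1)$). Parts~(1) and~(3) of your plan are unaffected.
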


\begin{proof}
All three results can be established using an algebra package such as Sage~\cite{sage}.
\end{proof}

We are now ready to prove Theorem~\ref{main1}.

\begin{proof}[Proof of Theorem~{\em \ref{main1}}] Let $a$ be an element of $S$ with $|\supp(a)|<Cn=0.63n$. We shall apply the results of Section~\ref{sec: machine} for the word $w=w_0^{60}$, where $w_0=w_0(\alpha,\beta)=[\alpha,\beta^{-1}][\alpha,\beta]$. 

The proof splits into a number of cases, according to the order of $a$. Let $|a|=2^{e_1}3^{e_2}e_3$, with $e_3$ coprime to $6$. Note that $2^{e_1} \le n$ and $3^{e_2} \le n$. Suppose first that $e_3>1$. Then 
$a^{2^{e_1}3^{e_2}}$ is non-trivial, and we may replace $a$ with $a^{2^{e_1}3^{e_2}}$ and assume that the order of $a$ is coprime to both $2$ and $3$ (note that $a^{2^{e_1}3^{e_2}}\in (S\cup S^{-1} \cup \{1\})^{n^2}$).

In Tables~\ref{table: w itself only}, \ref{table:2cycles}, \ref{table:3cycles}, \ref{table:4cycles}, and \ref{table:5cycles}, we present a family ${\mathcal T}$ of $\alpha\beta$-trees admitting $w_0^{60}$. These $\alpha\beta$-trees were obtained with the help of a computer; note that our conventions for representing them are outlined in Subsection~\ref{s: words and graphs}. Note too that if an $\alpha\beta$-graph admits $w_0^{e}$ then it also admits $w_0^{ek}$ for any positive integer $k$. The $\alpha\beta$-trees in these tables are pairwise non-isomorphic, do not have non-identity automorphisms, contain at most $16$ vertices, and have all $\alpha$-paths and $\beta$-paths of length at most $4$. For this family, 
\[
\sum_{T \in {\mathcal T}} \frac{\delta_{T} \cdot \fixed(T)}{|\Aut(T)|}=f(\delta),
\]
where $f(\delta)$ is as in \eqref{bigfeller}. 
If $a$ has a cycle of length $m\geq 7$ then relabel the letters of $\Omega$ so that this cycle is equal to $(1,\dots, m)$, define $\Lambda=\{1,2,3,4,5,6,m-3,m-2,m-1,m\}$ and let $g=(1,3,m)(2)(4)(5)(6)(m-3)(m-2)(m-1)\in\Sym(\Lambda)$. If this is not the case, then we must have $e_3=5$; in this case relabel the letters of $\Omega$ so that $a$ contains the cycle $(1,2,3,4,5)$ and $a$ fixes both $6$ and $7$. Then define $\Lambda=\{1,\dots, 7\}$ and let $g=(1,6)(3,7)(2)(4)(5)$.

By Lemma~\ref{reduce genset}, we may suppose that $|S| \le n$. We apply Theorem~\ref{main estimate} with $\delta_0=0.3$, $w=w_0^{60}$, ${\mathcal T}$, $N=4$, $\lambda=10$, $g$, $\kappa=16$, and $\varepsilon=0.001$. We obtain $r \in (S \cup S^{-1}\cup\{1\})^{n^{54}}$ such that $|\fix(w(a,a^r))| \ge 0.999 f(\delta)$. Note that $w(a,a^r)\in (S \cup S^{-1}\cup\{1\})^{n^{54}+n^2}$ and, of course, $n^{54}+n^2=O(n^{54})$. By Lemma~\ref{silly}, (\ref{i}) and (\ref{ii}), $w(a,a^r)$ is non-trivial because it contains a $7$-cycle. Replacing $a$ by $w(a,a^r)$ and using the same procedure as above, Lemma~\ref{festimates}, part (\ref{three}), implies that in at most nine iterations we obtain a permutation $a'$ with support size less than $0.326 n$. Each iteration increases the word length by a factor $O(n^2)$, as we may have to raise the input permutation to a suitable power to eliminate $2$ and $3$ from the cycle lengths, while conjugating by (a new) $r$ and substituting into the word $w$ contributes only constant multipliers to the word length. Hence $a'$ is a word in $S$ of length $O(n^{70})$ and, by Theorem~\ref{t: bbs}, $\diam(\Gamma(G,S))=O(n^{78})$.  

Suppose next that $e_3=1$, that is, $a$ has order $2^{e_1}3^{e_2}$. If $e_1>0$, let $k=2^{e_1-1}3^{e_2}$, otherwise let $k=3^{e_2-1}$. Then $k<n^2$, $a^k$ has order 2 or 3, and $a^k\in (S\cup S^{-1}\cup\{1\})^{n^2}$. 

In Tables~\ref{table: prime2} and \ref{table: prime3}, we present two families of $\alpha\beta$-trees admitting $w=w_0^{60}$, represented as before. The $\alpha\beta$-trees in Table~\ref{table: prime2} (resp. Table~\ref{table: prime3}) are pairwise non-isomorphic, contain at most 10 vertices, have all $\alpha$-cycles and $\beta$-cycles of length $2$ (resp. $3$), and have all $\alpha$-paths and $\beta$-paths of length at most $1$ (resp. $2$). 

In each cell of Tables~\ref{table: prime2} and \ref{table: prime3}, we have written $\Aut=k$ to mean that the automorphism group of the corresponding $\alpha\beta$-tree $T$ has size $k$. We define ${\mathcal T}$ to be the set of $\alpha\beta$-trees in Table~\ref{table: prime2} (resp. Table~\ref{table: prime3}) when $a$ has order $2$ (resp. has order $3$). For these families, 
$$\sum_{T \in {\mathcal T}} \frac{\delta_{T} \cdot \fixed(T)}{|\Aut(T)|}=h(\delta)$$
where
\begin{equation*}
h(\delta)=\left\{\begin{array}{ll}
(1-\delta)^2(1+2\delta+3\delta^2+ 4\delta^3+5\delta^4+6\delta^5)     &\textrm{if }a\textrm{ has order 2}; \\
& \\
(1 - \delta)^2 + \delta (1 - \delta)^2 + \delta (1 - \delta)^3 +2 \delta^3 (1 - \delta)^2 & \\
+4\delta^2 (1 - \delta)^4  + 3\delta^3 (1 - \delta)^5  + 12 \delta^{7} (1 - \delta)^4   &\textrm{if }a\textrm{ has order 3.} \\
+6\delta^6 (1 - \delta)^4+ \delta^4 (1 - \delta)^6   & \\
                 \end{array}\right.
\end{equation*}
 
Define $\Lambda=\{1,\dots, 7\}$. If $a$ has order $2$ label the elements of $\Omega$ so that $a|_\Lambda=(1,2)(3,4)(5,6)(7)$; then define $g=(1,5,7,2,3)(4)(6)$. If $a$ has order $3$ label the elements of $\Omega$ so that $a|_\Lambda=(1,2,3)(4,5,6)(7)$; then define $g=(1,7,2,4)(3)(5)(6)$.

By Lemma~\ref{reduce genset}, we may suppose that $|S| \le n$. We define $N$ to equal $1$ (resp. $2$) when $a$ has order $2$ (resp. has order $3$). We apply Theorem~\ref{main estimate} with $\delta_0=0.3$, $w=w_0^{60}$, ${\mathcal T}$, $N$, $\lambda=7$, $g$, $\kappa=10$, and $\varepsilon=0.001$. We obtain $r \in (S \cup S^{-1}\cup\{1\})^{n^{36}}$ such that $|\fix(w(a,a^r))| \ge 0.999 h(\delta)$. 

Using Sage \cite{sage} it is easy to check that the function $\delta \mapsto 1-0.999h(\delta)$ is monotone increasing on the interval $(0,1)$. Furthermore, for $\delta\leq 0.63$, we have $0.999h(\delta)>0.374$ and so $|\supp(w(a,a^r)|<0.626$. Note that $w(a,a^r)\in (S\cup S^{-1})^{O(n^{36})}$ and, by Lemma~\ref{silly}, (\ref{iii}) and (\ref{iv}),  $w(a,a^r)$ contains a $7$-cycle. 

We now run the first part of the argument using the element $w(a,a^r)$ instead of $a$ as our initial element of small support. After one iteration we obtain an element $a'\in (S\cup S^{-1}\cup \{1\})^{n^{2}O(n^{36})+n^{54}}$ with support of size less than $1-0.999f(\delta)$. Iterating as before, we obtain that
 $\diam(\Gamma(G,S))=O(n^{78})$.  
\end{proof}

\section{Improving Theorem~\ref{main1}}\label{sec: closing}

It should be clear to the reader that Theorem \ref{main1} is not optimal. In particular we prove Theorem \ref{main1} with respect to a particular word, $w=w_0^{60}$ where $w_0=[\alpha,\beta^{-1}][\alpha,\beta]$; it is this word that yields the value $C=0.63$. How might one go about improving this value?

The most obvious way of improving Theorem~\ref{main1} is via an appeal to higher powers. Consider $w_k=w_0^k$ where $k$ is any multiple of 60. All of the trees in Tables~\ref{table: w itself only}, \ref{table:2cycles}, \ref{table:3cycles}, \ref{table:4cycles} and \ref{table:5cycles} admit $w_k$ and, for suitable choices of $k$, there will be yet more trees to consider. This will inevitably result in an increase for the value of $C$.

There is a limit to the improvement that such a strategy might yield and we briefly explain why this is the case. Fix a word $v_0$, let $k$ be some positive integer, and define the word $v_k=v_0^k$. Let $\mathcal{T}$ be a set of $\alpha\beta$-trees admitting $v_k$ and consider the sum
\begin{equation}\label{e: explanation}
g_k(\delta) = \sum_{T \in {\mathcal T}} \frac{\delta_{T} \cdot \fixed(T)}{|\Aut(T)|}.
\end{equation}
The main result of Section~\ref{sec: machine}, Theorem~\ref{main estimate}, gives a lower bound for $|\fix(w(a,a^r))|$ in terms of $g_k(\delta)$, $\varepsilon$ and $n$; here $a$ is an element of support equal to $\delta n$ and $r$ is some short word. In particular, if the value of $g_k(\delta)$ exceeds the value of $1-\delta$ (by a sufficient margin in terms of $\varepsilon$) then we obtain an element of smaller support than $a$.

The advantage of considering higher powers of the word $v_0$ is exhibited in Theorem~\ref{main estimate} by noting that if, say, $k$ doubles, then new $\alpha\beta$-trees may be added to $\mathcal{T}$, thereby increasing the value of $g_k(\delta)$ for $\delta\in(0,1)$. 

Using methods different to those in this paper, the authors have developed a method to show that, in the ``generic'' case (see Section~\ref{sec: machine} for an explanation of what we mean by this), there is a number $\delta_0\in(0,1)$ such that
$$\limsup_{k\to\infty} g_k(\delta)<1-\delta$$
whenever $\delta>\delta_0$. It is important to note that the number $\delta_0$ depends only on the word $v_0$. Furthermore the number $\delta_0$ corresponds to the unique all-positive solution to a certain system of polynomial equations with rational coefficients and this system depends only on the word $v_0$. 

In the case $v_0=w_0=[\alpha,\beta^{-1}][\alpha,\beta]$, our method shows that $\delta_0 \approx 0.64242$. 
One can see, then, that the value for $C$ given in Theorem~\ref{main1} is close to optimal when it comes to powers of the word $w_0$.

In another direction one might hope to improve Theorem~\ref{main1} using an entirely
different choice of word $w_0$. 
With this in mind we undertook an exhaustive search of words of length
at most 20 which were {\em balanced} (i.e. $\alpha,\alpha^{-1}, \beta$ and $\beta^{-1}$
all occur the same number of times) and in which $\alpha^{\pm 1}$ and
$\beta^{\pm 1}$ occur alternately. For each such word $w_1$, we performed the 
following computer experiment. For a variety of values $n$ in the range $10^4 \le n \le 10^5$ and $\delta$ in the range $0.55 \le \delta \le 0.70$, we took a
random permutation $a \le S_n$ with $|\supp(a)|=\delta n$, we constructed $b$ as a random conjugate of $a$, and counted how many points occur in cycles of length at most $6$ in the permutation $w_1(a,b)$. The highest counts occured for $w_0=[\alpha,\beta^{-1}][\alpha,\beta]$ and for related words (like the cyclic permutations of $w_0$ or $w_0^2$)
and this is the reason for our use of the word $w_0$ in the preceding proof. 

We also carried out a non-exhaustive investigation into words that were either non-balanced or non-alternating. In every case, for a word $w_1$ of this kind, and for $a$ and $b$ as above, the computer tests suggested that permutations of the form $w_1(a,b)$ tended to have a smaller number of points in short cycles than $w_0(a,b)$.

\thebibliography{12}

\bibitem{babai}L.~Babai, On the diameter of {E}ulerian orientations of graphs, {\em Proc. of the {S}eventeenth {A}nnual {ACM}-{SIAM}
              {S}ymposium on {D}iscrete {A}lgorithms}, ACM, New York, 2006, 822--831.

\bibitem{bbs}L.~Babai, R.~Beals, \'A.~Seress,  
On the diameter of the symmetric group: polynomial bounds,
 {\em Proc. of the {F}ifteenth {A}nnual {ACM}-{SIAM}
              {S}ymposium on {D}iscrete {A}lgorithms}, ACM, New York, 2004, 1108--1112.

\bibitem{babaiseress88}
L.~Babai,  {\'A}.~Seress.
\newblock On the diameter of {C}ayley graphs of the symmetric group,
\newblock {\em J. Combin. Theory Ser. A}, \textbf{49} (1988), 175--179.

\bibitem{babaiseress}L.~Babai, \'A.~Seress, On the diameter of permutation groups, \textit{European J. Combin.} \textbf{13} (1992), 231--243.

\bibitem{Bochert}A.~Bochert, \H{U}ber die Classe der transitiven
  Substitutionengruppen (German), \textit{Math. Ann.} \textbf{49} (1897), 133–144. 


\bibitem{bgt}E.~Breuillard, B.~Green, T.~Tao, Linear approximate groups, \textit{Geometric and Functional Analysis}, to appear, Preprint available on the Math arXiv: {\tt http://arxiv.org/abs/1001.4570}.

\bibitem{ErdosRenyi}P.~Erd{\H{o}}s, A.~R\'enyi, Probabilistic methods in group theory, \textit{J. Analyse Math.} \textbf{14} (1965), 127--138.

\bibitem{helfgill}N.~Gill, H.~A.~Helfgott, Growth of small generating subsets in ${SL}_n(\mathbb{Z}/p \mathbb{Z})$, \textit{Int. Math. Res. Not.} \textbf{18} (2011),  4226-4251.

\bibitem{helfgill2}
\bysame, Growth in solvable subgroups of ${GL}_r(\mathbb{Z}/p \mathbb{Z})$, 2010. Preprint available on the Math arXiv: {\tt http://arxiv.org/abs/1008.5264}.

\bibitem{ghss}N.~Gill, H.~A.~Helfgott, \'A.~Seress, P.~Spiga. Preprint in preparation.

\bibitem{helfgott2}H.~A.~Helfgott, Growth and generation in {${\rm SL}\sb 2(\mathbb{Z}/p \mathbb{Z})$}, \textit{Ann. of Math. (2)} \textbf{167} (2008), {601--623}.

\bibitem{helfgott3}
\bysame, Growth and generation in ${SL}_3(\mathbb{Z}/p \mathbb{Z})$,  \textit{J. Eur. Math. Soc.} \textbf{13}  (2011),  no. 3, 761–851.

\bibitem{aph}H.~Helfgott, \'A.~Seress, On the diameter of permutation groups, 2011, Preprint available on the math arXiv: {\tt http://arxiv.org/abs/1109.3550}.

\bibitem{LO}H.~J.~Landau, A.~M.~Odlyzko, Bounds for eigenvalues of certain stochastic matrices, \textit{Linear algebra and its Applications} \textbf{38} (1981), 5--15.

\bibitem{Lov}L.~Lov{\'a}sz,
\newblock Random walks on graphs: a survey,
\newblock {\em Combinatorics, {P}aul {E}rd\H{o}s is eighty, {V}ol.\ 2
  ({K}eszthely, 1993)}, volume~2 of {\em Bolyai Soc. Math. Stud.}, J{\'a}nos Bolyai Math. Soc., Budapest, 1996, 353--397.

\bibitem{manning1}W.~A. Manning, The degree and class of multiply transitive groups,
  \textit{Trans. Amer. Math. Soc.} \textbf{18} (1917), no.~4, 463--479.

\bibitem{manning2}\bysame, The degree and class of multiply transitive groups. {II},
  \textit{Trans. Amer. Math. Soc.} \textbf{31} (1929), no.~4, 643--653.

\bibitem{manning3}\bysame, The degree and class of multiply transitive groups. {III},
  \textit{Trans. Amer. Math. Soc.} \textbf{35} (1933), no.~3, 585--599.

\bibitem{ppss}C.~E.~Praeger, L.~Pyber, P.~Spiga, E.~Szabo, The Weiss conjecture for locally primitive graphs with automorphism groups admitting composition factors of bounded rank, To appear in \textit{Proc. Amer. Math. Soc}. 

\bibitem{ps}L.~Pyber, E.~Szab\'o, Growth in finite simple groups of {L}ie type, 2010, Preprint available on the Math arXiv: {\tt http://arxiv.org/abs/1001.4556}.

\bibitem{ruzsa}E.~Z.~Ruzsa, Sums of finite sets, In Number theory, Springer, New York, 1996, 281--293.

\bibitem{seress}\'A.~Seress, Permutation group algorithms, Cambridge Tracts in Mathematics, \textbf{152}, Cambridge University Press, 2003.

\bibitem{sp}P.~Spiga, Two local conditions on the vertex stabiliser of arc-transitive graphs and their effect on the Sylow subgroups \textit{J. Group Theory} \textbf{15}, no.~1 (2012), 23--35.

\newcommand{\etalchar}[1]{$^{#1}$}
\bibitem[S{\etalchar{+}}09]{sage}
W.\thinspace{}A. Stein et~al., \emph{{S}age {M}athematics {S}oftware ({V}ersion
  4.8)}, The Sage Development Team, 2012, {\tt http://www.sagemath.org}


\bibitem{whiston}J.~Whiston, Maximal independent generating sets of the symmetric group,
\textit{J. Algebra} \textbf{232} (2000), no. 1, 255--268.

\bibitem{wielandt2}H.~Wielandt, Absch\"atzungen f\"ur den Grad einer Permutationsgruppe von vorgeschriebenem Transitivit\"atsgrad, Dissertation, (1934) Berlin.

\begin{table}[H]
\caption{$\alpha\beta$-trees admitting $w^2$  but not $w$}\label{table:2cycles}
\begin{tabular}{|c|c|}
\hline
\parbox[b]{\sizetwo}{
\resizebox{\sizetwo}{!}{\input{2cycles/tikz/g1.tex}} \dd{2}{4}}
&
\parbox[b]{\sizetwo}{
\resizebox{\sizetwo}{!}{\input{2cycles/tikz/g2.tex}} \dd{2}{4}}
\\ \hline 
\end{tabular} \medskip
\end{table}

%
%

\begin{table}[H]
\caption{$\alpha\beta$-trees admitting $w^3$ but not $w$}\label{table:3cycles}
\begin{tabular}{|c|c|c|c|}
\hline
\parbox[b]{\sizethree}{
\resizebox{\sizethree}{!}{ \input{3cycles/tikz/g4.tex}} \dd{3}{5}}
&
\parbox[b]{\sizefour}{
\resizebox{\sizefour}{!}{ \input{3cycles/tikz/g1.tex}} \dd{7}{4}}
&
\parbox[b]{\sizefour}{
\resizebox{\sizefour}{!}{ \input{3cycles/tikz/g2.tex}} \dd{6}{5}}
&
\parbox[b]{\sizefour}{
\resizebox{\sizefour}{!}{ \input{3cycles/tikz/g3.tex}} \dd{6}{5}}
\\ \hline 
\end{tabular} \medskip
\end{table}

%
%

\begin{table}[H]
\caption{$\alpha\beta$-trees admitting $w^4$ but not $w^2$}\label{table:4cycles}
\begin{tabular}{|c|c|c|c|c|}
\hline
\parbox[b]{\sizefour}{
\resizebox{\sizefour}{!}{ \input{4cycles/tikz/g1.tex}} \dd{6}{6}}  
&
\parbox[b]{\sizethree}{
\resizebox{\sizethree}{!}{ \input{4cycles/tikz/g2.tex}} \dd{5}{7}} 
&
\parbox[b]{\sizefour}{
\resizebox{\sizefour}{!}{ \input{4cycles/tikz/g3.tex}} \dd{6}{6}} 
&
\parbox[b]{\sizefour}{
\resizebox{\sizefour}{!}{ \input{4cycles/tikz/g4.tex}} \dd{6}{6}} 
&
\parbox[b]{\sizefour}{
\resizebox{\sizefour}{!}{ \input{4cycles/tikz/g5.tex}} \dd{6}{6}} 
\\ \hline
\parbox[b]{\sizefour}{
\resizebox{\sizefour}{!}{ \input{4cycles/tikz/g6.tex}} \dd{5}{7}} 
&
\parbox[b]{\sizefour}{
\resizebox{\sizefour}{!}{ \input{4cycles/tikz/g7.tex}} \dd{5}{7}} 
&
\parbox[b]{\sizefour}{
\resizebox{\sizefour}{!}{ \input{4cycles/tikz/g8.tex}} \dd{5}{7}} 
&
\parbox[b]{\sizefive}{
\resizebox{\sizefive}{!}{ \input{4cycles/tikz/g9.tex}} \dd{7}{6}} 
&
\parbox[b]{\sizefive}{
\resizebox{\sizefive}{!}{ \input{4cycles/tikz/g10.tex}} \dd{7}{6}} 
\\ \hline 
\end{tabular}
\end{table}

%
%

\begin{longtable}{|c|c|c|c|c|}
\caption{$\alpha\beta$-trees admitting $w^5$ but not $w$}\label{table:5cycles}\\
\hline
\parbox[b]{\sizefive}{
\resizebox{\sizefive}{!}{ \input{5cycles/tikz/g1.tex}} \dd{5}{7}} 
&  
\parbox[b]{\sizefive}{
\resizebox{\sizefour}{!}{ \input{5cycles/tikz/g2.tex}} \dd{5}{7}}
& 
\parbox[b]{\sizefive}{
\resizebox{\sizefour}{!}{ \input{5cycles/tikz/g3.tex}} \dd{5}{8}}
& 
\parbox[b]{\sizefive}{
\resizebox{\sizefour}{!}{ \input{5cycles/tikz/g4.tex}} \dd{5}{8}}
\\ \hline 
\parbox[b]{\sizefive}{
\resizebox{\sizefour}{!}{ \input{5cycles/tikz/g5.tex}} \dd{5}{8}}
& 
\parbox[b]{\sizefive}{
\resizebox{\sizefour}{!}{ \input{5cycles/tikz/g6.tex}} \dd{5}{8}}
& 
\parbox[b]{\sizefive}{
\resizebox{\sizethree}{!}{ \input{5cycles/tikz/g7.tex}} \dd{4}{9}}
& 
\parbox[b]{\sizefive}{
\resizebox{\sizefour}{!}{ \input{5cycles/tikz/g8.tex}} \dd{4}{9}}
\\ \hline 
\parbox[b]{\sizefive}{
\resizebox{\sizefive}{!}{ \input{5cycles/tikz/g9.tex}} \dd{7}{7}}
& 
\parbox[b]{\sizefive}{
\resizebox{\sizefive}{!}{ \input{5cycles/tikz/g10.tex}} \dd{7}{7}}
& 
\parbox[b]{\sizefive}{
\resizebox{\sizefive}{!}{ \input{5cycles/tikz/g11.tex}} \dd{7}{7}}
& 
\parbox[b]{\sizefive}{
\resizebox{\sizefive}{!}{ \input{5cycles/tikz/g12.tex}} \dd{6}{8}}
\\ \hline 
\parbox[b]{\sizefive}{
\resizebox{\sizefour}{!}{ \input{5cycles/tikz/g13.tex}} \dd{6}{8}}
& 
\parbox[b]{\sizefive}{
\resizebox{\sizefive}{!}{ \input{5cycles/tikz/g14.tex}} \dd{7}{7}}
& 
\parbox[b]{\sizefive}{
\resizebox{\sizefive}{!}{ \input{5cycles/tikz/g15.tex}} \dd{6}{8}}
& 
\parbox[b]{\sizefive}{
\resizebox{\sizefive}{!}{ \input{5cycles/tikz/g16.tex}} \dd{6}{8}}
\\ \hline 
\parbox[b]{\sizefive}{
\resizebox{\sizefive}{!}{ \input{5cycles/tikz/g17.tex}} \dd{6}{8}}
& 
\parbox[b]{\sizefive}{
\resizebox{\sizefive}{!}{ \input{5cycles/tikz/g18.tex}} \dd{6}{8}}
& 
\parbox[b]{\sizefive}{
\resizebox{\sizefive}{!}{ \input{5cycles/tikz/g19.tex}} \dd{6}{8}}
& 
\parbox[b]{\sizefive}{
\resizebox{\sizefive}{!}{ \input{5cycles/tikz/g20.tex}} \dd{6}{8}}
\\ \hline 
\parbox[b]{\sizefive}{
\resizebox{\sizefive}{!}{ \input{5cycles/tikz/g21.tex}} \dd{10}{7}}
& 
\parbox[b]{\sizefive}{
\resizebox{\sizefive}{!}{ \input{5cycles/tikz/g22.tex}} \dd{11}{6}}
& 
\parbox[b]{\sizefive}{
\resizebox{\sizefive}{!}{ \input{5cycles/tikz/g23.tex}} \dd{10}{7}}
& 
\parbox[b]{\sizefive}{
\resizebox{\sizefive}{!}{ \input{5cycles/tikz/g24.tex}} \dd{11}{6}}
\\ \hline 
\parbox[b]{\sizefive}{
\resizebox{\sizefive}{!}{ \input{5cycles/tikz/g25.tex}} \dd{10}{7}}
& 
\parbox[b]{\sizefive}{
\resizebox{\sizefive}{!}{ \input{5cycles/tikz/g26.tex}} \dd{10}{7}}
& 
\parbox[b]{\sizefive}{
\resizebox{\sizefive}{!}{ \input{5cycles/tikz/g27.tex}} \dd{9}{8}}
& 
\parbox[b]{\sizefive}{
\resizebox{\sizefive}{!}{ \input{5cycles/tikz/g28.tex}} \dd{9}{8}}
\\ \hline 
\parbox[b]{\sizefive}{
\resizebox{\sizefive}{!}{ \input{5cycles/tikz/g29.tex}} \dd{11}{7}}
& 
\parbox[b]{\sizefive}{
\resizebox{\sizefive}{!}{ \input{5cycles/tikz/g30.tex}} \dd{11}{7}}
& 
\parbox[b]{\sizefive}{
\resizebox{\sizefive}{!}{ \input{5cycles/tikz/g31.tex}} \dd{11}{7}}
& \\ \hline 
\end{longtable}

\begin{longtable}{|c|c|c|c|c|}
\caption{$\alpha\beta$-trees for an involution}\label{table: prime2}\\
\hline
\parbox[b]{30mm}{
\resizebox{!}{3mm}{
\begin{tikzpicture}[>=latex,line join=bevel,]
\tikzstyle{vx0}=[circle, ball color=blue!80!black, inner sep=5bp];\tikzstyle{vx1}=[star, ball color=red, inner sep=4bp];\tikzstyle{ed0}=[>=triangle 45,ultra thick,dashed,red];\tikzstyle{ed1}=[>=triangle 45,thick,blue!80!black];\begin{pgfscope}[scale=0.5]%
\node (0) at (27bp,18bp) [vx1] {};
\end{pgfscope}%
\end{tikzpicture}

} \\ \dd{0}{2} \\ $\mathsf{Aut}= 1$}
&
\parbox[b]{30mm}{
\resizebox{!}{9mm}{\input{order2/g1.tex}} \\ \dd{1}{2} \\ $\mathsf{Aut}= 2$}
&
\parbox[b]{30mm}{
\resizebox{!}{9mm}{\input{order2/g2.tex}} \\ \dd{1}{2} \\ $\mathsf{Aut}= 2$}
&
\parbox[b]{30mm}{
\resizebox{!}{18mm}{\input{order2/g4.tex}} \\ \dd{2}{2} \\ $\mathsf{Aut}= 1$}
&
\parbox[b]{30mm}{
\resizebox{!}{27mm}{\input{order2/g5.tex}} \\ \dd{3}{2} \\ $\mathsf{Aut}= 2$}
\\ \hline
\parbox[b]{30mm}{
\resizebox{!}{27mm}{\input{order2/g6.tex}} \\ \dd{3}{2} \\ $\mathsf{Aut}= 2$}
&
\parbox[b]{30mm}{
\resizebox{!}{36mm}{\input{order2/g7.tex}} \\ \dd{4}{2} \\ $\mathsf{Aut}= 1$}
&
\parbox[b]{30mm}{
\resizebox{!}{45mm}{\input{order2/g8.tex}} \\ \dd{5}{2} \\ $\mathsf{Aut}= 2$}
&
\parbox[b]{30mm}{
\resizebox{!}{45mm}{\input{order2/g9.tex}} \\ \dd{5}{2} \\ $\mathsf{Aut}= 2$}
&
\\ \hline
\end{longtable}

%
\begin{longtable}{|c|c|c|c|}
\caption{$\alpha\beta$-graphs for an element of order $3$}\label{table: prime3}\\
\hline
\parbox[b]{30mm}{\begin{tikzpicture}[>=latex,line join=bevel,]
\tikzstyle{vx0}=[circle, ball color=blue!80!black, inner sep=5bp];\tikzstyle{vx1}=[star, ball color=red, inner sep=4bp];\tikzstyle{ed0}=[>=triangle 45,ultra thick,red,dashed];\tikzstyle{ed1}=[>=triangle 45,ultra thick,thick,blue!80!black];\begin{pgfscope}[scale=0.5]%
  \node (2) at (27bp,90bp) [vx1] {};
\end{pgfscope}%
\end{tikzpicture} \\ \dd{0}{2} \\ $\mathsf{Aut}= 1$}
& 
\parbox[b]{30mm}{
\resizebox{!}{9mm}{\begin{tikzpicture}[>=latex,line join=bevel,]
\tikzstyle{vx0}=[circle, ball color=blue!80!black, inner sep=5bp];\tikzstyle{vx1}=[star, ball color=red, inner sep=4bp];\tikzstyle{ed0}=[>=triangle 45,ultra thick,red,dashed];\tikzstyle{ed1}=[>=triangle 45,ultra thick,thick,blue!80!black];\begin{pgfscope}[scale=0.5]%
\node (1) at (27bp,18bp) [vx1] {};
  \node (0) at (27bp,90bp) [vx0] {};
  \draw [->,ed0] (0) ..controls (27bp,64.131bp) and (27bp,54.974bp)  .. (1);
\end{pgfscope}%
\end{tikzpicture}} \\ \dd{1}{2} \\ $\mathsf{Aut}= 1$}
&
\parbox[b]{30mm}{
\resizebox{!}{18mm}{\begin{tikzpicture}[>=latex,line join=bevel,]
\tikzstyle{vx0}=[circle, ball color=blue!80!black, inner sep=5bp];\tikzstyle{vx1}=[star, ball color=red, inner sep=4bp];\tikzstyle{ed0}=[>=triangle 45,ultra thick,red,dashed];\tikzstyle{ed1}=[>=triangle 45,ultra thick,thick,blue!80!black];\begin{pgfscope}[scale=0.5]%
\node (1) at (27bp,162bp) [vx0] {};
  \node (0) at (27bp,18bp) [vx0] {};
  \node (2) at (27bp,90bp) [vx1] {};
  \draw [->,ed1] (1) ..controls (27bp,136.13bp) and (27bp,126.97bp)  .. (2);
  \draw [->,ed1] (2) ..controls (27bp,64.131bp) and (27bp,54.974bp)  .. (0);
\end{pgfscope}%
\end{tikzpicture}} \\ \dd{1}{3} \\ $\mathsf{Aut}= 1$}
&
\parbox[b]{30mm}{
\resizebox{!}{18mm}{\input{order3/g5.tex}} \\ \dd{3}{2} \\ $\mathsf{Aut}= 1$}
\\ \hline
\parbox[b]{30mm}{
\resizebox{!}{18mm}{\input{order3/g6.tex}} \\ \dd{3}{2} \\ $\mathsf{Aut}= 1$}
&
\parbox[b]{30mm}{
\resizebox{!}{36mm}{
\begin{tikzpicture}[>=latex,line join=bevel,]
\tikzstyle{vx0}=[circle, ball color=blue!80!black, inner sep=5bp];\tikzstyle{vx1}=[star, ball color=red, inner sep=4bp];\tikzstyle{ed0}=[>=triangle 45,ultra thick,red,dashed];\tikzstyle{ed1}=[>=triangle 45,ultra thick,thick,blue!80!black];\begin{pgfscope}[scale=0.5]%
\node (1) at (82bp,90bp) [vx0] {};
  \node (3) at (27bp,234bp) [vx0] {};
  \node (5) at (54bp,306bp) [vx0] {};
  \node (4) at (54bp,18bp) [vx1] {};
  \node (6) at (54bp,162bp) [vx1] {};
  \draw [->,ed0] (3) ..controls (36.748bp,208.01bp) and (40.462bp,198.1bp)  .. (6);
  \draw [->,ed1] (6) ..controls (63.987bp,136.32bp) and (67.894bp,126.27bp)  .. (1);
  \draw [->,ed1] (4) ..controls (48.891bp,46.402bp) and (46.891bp,59.902bp)  .. (46bp,72bp) .. controls (44.824bp,87.957bp) and (44.824bp,92.043bp)  .. (46bp,108bp) .. controls (46.627bp,116.51bp) and (47.802bp,125.71bp)  .. (6);
  \draw [->,ed0] (6) ..controls (59.747bp,190.38bp) and (61.997bp,203.88bp)  .. (63bp,216bp) .. controls (64.32bp,231.95bp) and (64.32bp,236.05bp)  .. (63bp,252bp) .. controls (62.294bp,260.52bp) and (60.973bp,269.73bp)  .. (5);
  \draw [->,ed1] (1) ..controls (72.013bp,64.319bp) and (68.106bp,54.274bp)  .. (4);
  \draw [->,ed0] (5) ..controls (44.252bp,280.01bp) and (40.538bp,270.1bp)  .. (3);
\end{pgfscope}%
\end{tikzpicture}} \\ \dd{2}{4} \\ $\mathsf{Aut}= 1$}
&
\parbox[b]{30mm}{
\resizebox{!}{36mm}{
\begin{tikzpicture}[>=latex,line join=bevel,]
\tikzstyle{vx0}=[circle, ball color=blue!80!black, inner sep=5bp];\tikzstyle{vx1}=[star, ball color=red, inner sep=4bp];\tikzstyle{ed0}=[>=triangle 45,ultra thick,red,dashed];\tikzstyle{ed1}=[>=triangle 45,ultra thick,thick,blue!80!black];\begin{pgfscope}[scale=0.5]%
\node (1) at (82bp,90bp) [vx1] {};
  \node (3) at (27bp,234bp) [vx0] {};
  \node (5) at (54bp,306bp) [vx1] {};
  \node (4) at (54bp,18bp) [vx0] {};
  \node (6) at (54bp,162bp) [vx0] {};
  \draw [->,ed0] (3) ..controls (36.748bp,208.01bp) and (40.462bp,198.1bp)  .. (6);
  \draw [->,ed1] (6) ..controls (63.987bp,136.32bp) and (67.894bp,126.27bp)  .. (1);
  \draw [->,ed1] (4) ..controls (48.891bp,46.402bp) and (46.891bp,59.902bp)  .. (46bp,72bp) .. controls (44.824bp,87.957bp) and (44.824bp,92.043bp)  .. (46bp,108bp) .. controls (46.627bp,116.51bp) and (47.802bp,125.71bp)  .. (6);
  \draw [->,ed0] (6) ..controls (59.747bp,190.38bp) and (61.997bp,203.88bp)  .. (63bp,216bp) .. controls (64.32bp,231.95bp) and (64.32bp,236.05bp)  .. (63bp,252bp) .. controls (62.294bp,260.52bp) and (60.973bp,269.73bp)  .. (5);
  \draw [->,ed1] (1) ..controls (72.013bp,64.319bp) and (68.106bp,54.274bp)  .. (4);
  \draw [->,ed0] (5) ..controls (44.252bp,280.01bp) and (40.538bp,270.1bp)  .. (3);
\end{pgfscope}%
\end{tikzpicture}} \\ \dd{2}{4} \\ $\mathsf{Aut}= 1$}
&
\parbox[b]{30mm}{
\resizebox{!}{54mm}{
\begin{tikzpicture}[>=latex,line join=bevel,]
\tikzstyle{vx0}=[circle, ball color=blue!80!black, inner sep=5bp];\tikzstyle{vx1}=[star, ball color=red, inner sep=4bp];\tikzstyle{ed0}=[>=triangle 45,ultra thick,red,dashed];\tikzstyle{ed1}=[>=triangle 45,ultra thick,thick,blue!80!black];\begin{pgfscope}[scale=0.5]%
\node (1) at (82bp,90bp) [vx0] {};
  \node (0) at (54bp,450bp) [vx1] {};
  \node (3) at (27bp,234bp) [vx1] {};
  \node (2) at (27bp,378bp) [vx0] {};
  \node (5) at (54bp,306bp) [vx0] {};
  \node (4) at (54bp,18bp) [vx0] {};
  \node (6) at (54bp,162bp) [vx1] {};
  \draw [->,ed1] (3) ..controls (36.748bp,208.01bp) and (40.462bp,198.1bp)  .. (6);
  \draw [->,ed0] (0) ..controls (44.252bp,424.01bp) and (40.538bp,414.1bp)  .. (2);
  \draw [->,ed0] (6) ..controls (63.987bp,136.32bp) and (67.894bp,126.27bp)  .. (1);
  \draw [->,ed0] (5) ..controls (59.747bp,334.38bp) and (61.997bp,347.88bp)  .. (63bp,360bp) .. controls (64.32bp,375.95bp) and (64.32bp,380.05bp)  .. (63bp,396bp) .. controls (62.294bp,404.52bp) and (60.973bp,413.73bp)  .. (0);
  \draw [->,ed0] (4) ..controls (48.891bp,46.402bp) and (46.891bp,59.902bp)  .. (46bp,72bp) .. controls (44.824bp,87.957bp) and (44.824bp,92.043bp)  .. (46bp,108bp) .. controls (46.627bp,116.51bp) and (47.802bp,125.71bp)  .. (6);
  \draw [->,ed1] (6) ..controls (59.747bp,190.38bp) and (61.997bp,203.88bp)  .. (63bp,216bp) .. controls (64.32bp,231.95bp) and (64.32bp,236.05bp)  .. (63bp,252bp) .. controls (62.294bp,260.52bp) and (60.973bp,269.73bp)  .. (5);
  \draw [->,ed0] (1) ..controls (72.013bp,64.319bp) and (68.106bp,54.274bp)  .. (4);
  \draw [->,ed1] (5) ..controls (44.252bp,280.01bp) and (40.538bp,270.1bp)  .. (3);
  \draw [->,ed0] (2) ..controls (36.748bp,352.01bp) and (40.462bp,342.1bp)  .. (5);
\end{pgfscope}%
\end{tikzpicture}} \\ \dd{3}{5} \\ $\mathsf{Aut}= 1$}

\\ \hline

\parbox[b]{\sizefour}{
\resizebox{\sizefour}{!}{\begin{tikzpicture}[>=latex,line join=bevel,]
\tikzstyle{vx0}=[circle, ball color=blue!80!black, inner sep=5bp];\tikzstyle{vx1}=[star, ball color=red, inner sep=4bp];\tikzstyle{ed0}=[>=triangle 45,ultra thick,dashed,red];\tikzstyle{ed1}=[>=triangle 45,thick,blue!80!black];\begin{pgfscope}[scale=0.5]%
\node (1) at (171bp,90bp) [vx0] {};
  \node (0) at (171bp,18bp) [vx0] {};
  \node (3) at (99bp,162bp) [vx0] {};
  \node (2) at (99bp,234bp) [vx0] {};
  \node (5) at (171bp,162bp) [vx0] {};
  \node (4) at (27bp,90bp) [vx0] {};
  \node (7) at (63bp,18bp) [vx1] {};
  \node (6) at (207bp,234bp) [vx0] {};
  \node (9) at (99bp,90bp) [vx1] {};
  \node (8) at (243bp,162bp) [vx1] {};
  \draw [->,ed0] (1) ..controls (171bp,64.131bp) and (171bp,54.974bp)  .. (0);
  \draw [->,ed0] (2) ..controls (99bp,208.13bp) and (99bp,198.97bp)  .. (3);
  \draw [->,ed0] (5) ..controls (145.8bp,136.8bp) and (132.68bp,123.68bp)  .. (9);
  \draw [->,ed0] (4) ..controls (39.96bp,64.081bp) and (45.154bp,53.693bp)  .. (7);
  \draw [->,ed1] (3) ..controls (99bp,136.13bp) and (99bp,126.97bp)  .. (9);
  \draw [->,ed1] (6) ..controls (194.04bp,208.08bp) and (188.85bp,197.69bp)  .. (5);
  \draw [->,ed1] (9) ..controls (86.04bp,64.081bp) and (80.846bp,53.693bp)  .. (7);
  \draw [->,ed1] (5) ..controls (171bp,136.13bp) and (171bp,126.97bp)  .. (1);
  \draw [->,ed0] (6) ..controls (219.96bp,208.08bp) and (225.15bp,197.69bp)  .. (8);
\end{pgfscope}%
\end{tikzpicture}} \dd{7}{4} \\ $\mathsf{Aut}= 1$}
&
\parbox[b]{\sizethree}{
\resizebox{\sizethree}{!}{\begin{tikzpicture}[>=latex,line join=bevel,]
\tikzstyle{vx0}=[circle, ball color=blue!80!black, inner sep=5bp];\tikzstyle{vx1}=[star, ball color=red, inner sep=4bp];\tikzstyle{ed0}=[>=triangle 45,ultra thick,dashed,red];\tikzstyle{ed1}=[>=triangle 45,thick,blue!80!black];\begin{pgfscope}[scale=0.5]%
\node (1) at (99bp,162bp) [vx0] {};
  \node (0) at (99bp,18bp) [vx0] {};
  \node (3) at (99bp,90bp) [vx0] {};
  \node (2) at (27bp,162bp) [vx0] {};
  \node (5) at (135bp,234bp) [vx0] {};
  \node (4) at (27bp,90bp) [vx0] {};
  \node (7) at (171bp,162bp) [vx1] {};
  \node (9) at (171bp,90bp) [vx1] {};
  \node (8) at (27bp,18bp) [vx1] {};
  \draw [->,ed1] (4) ..controls (27bp,64.131bp) and (27bp,54.974bp)  .. (8);
  \draw [->,ed0] (5) ..controls (147.96bp,208.08bp) and (153.15bp,197.69bp)  .. (7);
  \draw [->,ed0] (3) ..controls (73.803bp,64.803bp) and (60.685bp,51.685bp)  .. (8);
  \draw [->,ed1] (3) ..controls (99bp,64.131bp) and (99bp,54.974bp)  .. (0);
  \draw [->,ed1] (9) -- (3);
  \draw [->,ed1] (5) ..controls (122.04bp,208.08bp) and (116.85bp,197.69bp)  .. (1);
  \draw [->,ed1] (0) -- (9);
  \draw [->,ed0] (1) ..controls (99bp,136.13bp) and (99bp,126.97bp)  .. (3);
  \draw [->,ed0] (2) ..controls (27bp,136.13bp) and (27bp,126.97bp)  .. (4);
\end{pgfscope}%
\end{tikzpicture}} \dd{6}{4} \\ $\mathsf{Aut}= 1$}
&
\parbox[b]{\sizefour}{
\resizebox{\sizefour}{!}{\begin{tikzpicture}[>=latex,line join=bevel,]
\tikzstyle{vx0}=[circle, ball color=blue!80!black, inner sep=5bp];\tikzstyle{vx1}=[star, ball color=red, inner sep=4bp];\tikzstyle{ed0}=[>=triangle 45,ultra thick,dashed,red];\tikzstyle{ed1}=[>=triangle 45,thick,blue!80!black];\begin{pgfscope}[scale=0.5]%
\node (1) at (243bp,90bp) [vx0] {};
  \node (3) at (99bp,90bp) [vx0] {};
  \node (2) at (243bp,162bp) [vx0] {};
  \node (5) at (27bp,90bp) [vx0] {};
  \node (4) at (171bp,234bp) [vx0] {};
  \node (7) at (99bp,162bp) [vx1] {};
  \node (6) at (243bp,234bp) [vx0] {};
  \node (9) at (171bp,162bp) [vx1] {};
  \node (8) at (63bp,18bp) [vx1] {};
  \draw [->,ed1] (6) ..controls (243bp,208.13bp) and (243bp,198.97bp)  .. (2);
  \draw [->,ed0] (5) ..controls (39.96bp,64.081bp) and (45.154bp,53.693bp)  .. (8);
  \draw [->,ed0] (9) ..controls (145.8bp,136.8bp) and (132.68bp,123.68bp)  .. (3);
  \draw [->,ed1] (3) ..controls (86.04bp,64.081bp) and (80.846bp,53.693bp)  .. (8);
  \draw [->,ed0] (2) ..controls (243bp,136.13bp) and (243bp,126.97bp)  .. (1);
  \draw [->,ed1] (7) -- (4);
  \draw [->,ed1] (9) -- (7);
  \draw [->,ed0] (6) ..controls (217.8bp,208.8bp) and (204.68bp,195.68bp)  .. (9);
  \draw [->,ed1] (4) -- (9);
\end{pgfscope}%
\end{tikzpicture}} \dd{6}{4} \\ $\mathsf{Aut}= 1$}
& 
\parbox[b]{\sizefour}{
\resizebox{!}{\sizefour}{\begin{tikzpicture}[>=latex,line join=bevel,]
\tikzstyle{vx0}=[circle, ball color=blue!80!black, inner sep=5bp];\tikzstyle{vx1}=[star, ball color=red, inner sep=4bp];\tikzstyle{ed0}=[>=triangle 45,ultra thick,red,dashed];\tikzstyle{ed1}=[>=triangle 45,ultra thick,thick,blue!80!black];\begin{pgfscope}[scale=0.5]%
\node (1) at (108bp,162bp) [vx1] {};
  \node (0) at (108bp,306bp) [vx0] {};
  \node (3) at (27bp,234bp) [vx0] {};
  \node (2) at (54bp,378bp) [vx1] {};
  \node (5) at (54bp,306bp) [vx0] {};
  \node (4) at (108bp,234bp) [vx0] {};
  \node (6) at (54bp,162bp) [vx0] {};
  \node (7) at (-20bp,162bp) [vx1] {};
  \node (8) at (-20bp,306bp) [vx0] {};
  \draw [->,ed1] (7)  -- (8);
  \draw [->,ed1] (3)  -- (7);
  \draw [->,ed1] (8)  -- (3);
  \draw [->,ed0] (3) ..controls (36.748bp,208.01bp) and (40.462bp,198.1bp)  .. (6);
  \draw [->,ed1] (2) -- (0);
  \draw [->,ed1] (6) -- (1);
  \draw [->,ed1] (5) -- (0);
  \draw [->,ed1] (4) -- (6);
  \draw [->,ed0] (6) ..controls (59.747bp,190.38bp) and (61.997bp,203.88bp)  .. (63bp,216bp) .. controls (64.32bp,231.95bp) and (64.32bp,236.05bp)  .. (63bp,252bp) .. controls (62.294bp,260.52bp) and (60.973bp,269.73bp)  .. (5);
  \draw [->,ed1] (1) -- (4);
  \draw [->,ed0] (5) ..controls (44.252bp,280.01bp) and (40.538bp,270.1bp)  .. (3);
  \draw [->,ed1] (5) -- (2);
\end{pgfscope}%
\end{tikzpicture}} \\ \dd{4}{6} \\ $\mathsf{Aut}= 3$}
\\ \hline

\parbox[b]{\sizethree}{
\resizebox{\sizethree}{!}{\begin{tikzpicture}[>=latex,line join=bevel,]
\tikzstyle{vx0}=[circle, ball color=blue!80!black, inner sep=5bp];\tikzstyle{vx1}=[star, ball color=red, inner sep=4bp];\tikzstyle{ed0}=[>=triangle 45,ultra thick,dashed,red];\tikzstyle{ed1}=[>=triangle 45,thick,blue!80!black];\begin{pgfscope}[scale=0.5]%
  \node (0) at (99bp,234bp) [vx1] {};
\node (1) at (27bp,234bp) [vx0] {};
  \node (2) at (27bp,162bp) [vx0] {};
  \node (3) at (27bp,90bp) [vx0] {};
  \node (4) at (27bp,18bp) [vx0] {};
  \node (5) at (99bp,18bp) [vx0] {};
  \node (6) at (171bp,18bp) [vx1] {};
  \node (7) at (171bp,90bp) [vx0] {};
  \node (8) at (171bp,162bp) [vx0] {};
  \node (9) at (171bp,234bp) [vx1] {};
  \draw [->,ed0] (1) -- (0);
  \draw [->,ed1] (1) -- (2);
  \draw [->,ed0] (2) -- (3);
  \draw [->,ed1] (4) -- (3);
  \draw [->,ed1] (5) -- (4);
  \draw [->,ed1] (3) -- (5);
  \draw [->,ed0] (5) -- (6);
  \draw [->,ed0] (6) -- (7);
  \draw [->,ed0] (7) -- (5);
  \draw [->,ed1] (8) -- (7);
  \draw [->,ed0] (8) -- (9);
\end{pgfscope}%
\end{tikzpicture}} \dd{7}{4} \\ $\mathsf{Aut}= 1$}

&

\parbox[b]{\sizethree}{
\resizebox{\sizethree}{!}{\begin{tikzpicture}[>=latex,line join=bevel,]
\tikzstyle{vx0}=[circle, ball color=blue!80!black, inner sep=5bp];\tikzstyle{vx1}=[star, ball color=red, inner sep=4bp];\tikzstyle{ed0}=[>=triangle 45,ultra thick,dashed,red];\tikzstyle{ed1}=[>=triangle 45,thick,blue!80!black];\begin{pgfscope}[scale=0.5]%
  \node (0) at (99bp,234bp) [vx0] {};
\node (1) at (27bp,234bp) [vx0] {};
  \node (2) at (27bp,162bp) [vx1] {};
  \node (3) at (27bp,90bp) [vx0] {};
  \node (4) at (27bp,18bp) [vx1] {};
  \node (5) at (99bp,18bp) [vx0] {};
  \node (6) at (171bp,18bp) [vx0] {};
  \node (7) at (171bp,90bp) [vx1] {};
  \node (8) at (171bp,162bp) [vx0] {};
  \node (9) at (171bp,234bp) [vx0] {};
  \draw [->,ed0] (0) -- (1);
  \draw [->,ed1] (1) -- (2);
  \draw [->,ed0] (3) -- (2);
  \draw [->,ed1] (3) -- (5);
  \draw [->,ed1] (4) -- (3);
  \draw [->,ed1] (5) -- (4);
  \draw [->,ed0] (6) -- (5);
  \draw [->,ed0] (7) -- (6);
  \draw [->,ed0] (5) -- (7);
  \draw [->,ed1] (8) -- (7);
  \draw [->,ed0] (9) -- (8);
\end{pgfscope}%
\end{tikzpicture}} \dd{7}{4} \\ $\mathsf{Aut}= 1$}

&

\parbox[b]{\sizethree}{
\resizebox{\sizethree}{!}{\begin{tikzpicture}[>=latex,line join=bevel,]
\tikzstyle{vx0}=[circle, ball color=blue!80!black, inner sep=5bp];\tikzstyle{vx1}=[star, ball color=red, inner sep=4bp];\tikzstyle{ed0}=[>=triangle 45,ultra thick,dashed,red];\tikzstyle{ed1}=[>=triangle 45,thick,blue!80!black];\begin{pgfscope}[scale=0.5]%
  \node (0) at (99bp,234bp) [vx0] {};
\node (1) at (27bp,234bp) [vx1] {};
  \node (2) at (27bp,162bp) [vx0] {};
  \node (3) at (27bp,90bp) [vx0] {};
  \node (4) at (27bp,18bp) [vx0] {};
  \node (5) at (99bp,18bp) [vx1] {};
  \node (6) at (171bp,18bp) [vx0] {};
  \node (7) at (171bp,90bp) [vx0] {};
  \node (8) at (171bp,162bp) [vx1] {};
  \node (9) at (171bp,234bp) [vx0] {};
  \draw [->,ed0] (0) -- (1);
  \draw [->,ed1] (1) -- (2);
  \draw [->,ed0] (3) -- (2);
  \draw [->,ed1] (3) -- (4);
  \draw [->,ed1] (4) -- (5);
  \draw [->,ed1] (5) -- (3);
  \draw [->,ed0] (6) -- (5);
  \draw [->,ed0] (7) -- (6);
  \draw [->,ed0] (5) -- (7);
  \draw [->,ed1] (7) -- (8);
  \draw [->,ed0] (9) -- (8);
\end{pgfscope}%
\end{tikzpicture}} \dd{7}{4} \\ $\mathsf{Aut}= 1$}
&

\\ \hline
\end{longtable}

\end{document}